\newtheorem{thm}{Theorem}[section]
\theoremstyle {definition}
\newtheorem{cor}[thm]{Corollary}
\newtheorem{prop}[thm]{Proposition}
\newtheorem{defn}[thm]{Definition}
\newtheorem{lem}[thm]{Lemma}
\newtheorem{eg}[thm]{Example}
\newtheorem{rem}[thm]{Remark}
\newtheorem{que}[thm]{Question}
\numberwithin{equation}{section}
\begin{document}

\title[The Zariski topology-graph on the maximal spectrum]
{The Zariski topology-graph on the maximal spectrum of modules over commutative rings}%
\author{Habibollah Ansari-Toroghy }%
\address{Department of pure Mathematics, Faculty of mathematical Sciences, University of
Guilan,
P. O. Box 41335-19141 Rasht, Iran.} %
\email{ansari@guilan.ac.ir}%

\author{Shokoofeh Habibi}%
\address{Department of pure Mathematics, Faculty of mathematical Sciences, University of
Guilan,
P. O. Box 41335-19141 Rasht, Iran.}%
\email{sh.habibi@phd.guilan.ac.ir}%

\subjclass [2010] {13C13, 13C99, 05C75}%
\keywords{ Zariski topology, maximal submodules, graph, Vertices, annihilating-submodule}%

\date{\today}%
\begin{abstract}
Let $M$ be a module over a commutative ring and let $Spec(M)$
(resp. $Max(M)$) be the collection of all prime (resp. maximal)
submodules of $M$. We topologize $Spec(M)$ with Zariski topology,
which is analogous to that for $Spec(R)$, and consider $Max(M)$ as
the induced subspace topology. For any non-empty subset $T$ of
$Max(M)$, we introduce a new graph $G(\tau^{m}_{T})$,
 called the Zariski topology-graph on the maximal spectrum of $M$. This graph helps us to
  study the algebraic (resp. topological) properties of $M$ (resp. $Max(M)$) by using the
 graph theoretical tools.
\end{abstract}
\maketitle
\section{Introduction}
Throughout this paper $R$ is a commutative ring with a non-zero
identity and $M$ is a unital $R$-module. By $N\leq M$ (resp. $N<
M$) we mean that $N$ is a submodule (resp. proper submodule) of
$M$.

There are many papers on assigning graphs to rings or modules.
Annihilating-ideal graphs of rings, first introduced and studied
in \cite{br11}, provide an excellent setting for studying the
ideal structure of a ring. $AG(R)$, the Annihilating-ideal graph
of $R$, to be a graph whose vertices are ideals of $R$ with
non-zero annihilators and in which two vertices $I$ and $J$ are
adjacent if and only if $IJ=0$.

In \cite{ah}, we generalized the above idea to submodules of $M$
and define the (undirected) graph $AG(M)$, called \textit {the
annihilating submodule graph}, with vertices\\ $V(AG(M))$= $\{N
\leq M:$ there exists $\{0\}\neq K<M$ with $NK=0$ \}, where $NK$,
the product of $N$ and $K$, is defined by $(N:M)(K:M)M$ (see
\cite{af07}). In this graph, distinct vertices $N,L \in AG(M)$ are
adjacent if and only if $NL=0$. In section two of this article, we
collect some fundamental properties of the annihilating-submodule
graph of a module which will be used in this work.

A prime submodule (or a $p$-prime submodule) of $M$ is a proper
submodule $P$ of $M$ such that whenever $re\in P$ for some $r\in
R$ and $e \in M$, either $e\in P$ or $r\in p$ \cite{lu84}.

The prime spectrum (or simply, the spectrum) of $M$ is the set of
all prime submodules of $M$ and denoted by $Spec(M).$

The \textit{Zariski topology} on $Spec(M)$ is the topology
$\tau_M$ described by taking the set $Z(M)= \{ V(N): N \leq M \}$
as the set of closed sets of $Spec(M)$, where $V(N)=\{P \in
Spec(M): (P:M)\supseteq (N:M) \}$ \cite{lu99}.

The closed subset $V(N)$, where $N$ is a submodule of $M$, plays
an important role in the Zariski topology on $Spec(M)$. In
\cite{ah}, We employed these sets and defined a new graph \textit
{$G(\tau_T)$}, called the \textit{Zariski topology-graph}. This
graph helps us to study algebraic (resp.topological) properties of
modules (resp. Spec(M)) by using the graphs theoretical tools.
$G(\tau_T)$ is an undirected graph with vertices $V(G(\tau_T)$=
$\{N < M :$ there exists $K < M$ such that $V(N)\cup V(K)=T$ and
$V(N),V(K)\neq T\}$, where $T$ is a non-empty subset of $Spec(M)$
and distinct vertices $N$ and $L$ are adjacent if and only if
$V(N)\cup V(L)=T$.

There exists a topology on $Max(M)$ having $Z^{m}(M) = \{
V^{m}(N): N \leq M \}$ as the set of closed sets of $Max(M)$,
where $V^{m}(N) = \{Q\in Max(M):(Q : M) \supseteq (N : M)\}$. We
denote this topology by $\tau^{m}_M$. In fact $\tau^{m}_M$ is the
same as the subspace topology induced by $\tau_M$ on $Max(M)$.

In this paper, we define a new graph $G(\tau^{m}_T)$, called the
\textit{Zariski topology-graph on the maximal spectrum of M},
where $T$ is a non-empty subset of $Max(M)$, and by using this
graph, we study algebraic (resp. topological) properties of $M$
(resp. $Max(M)$). $G(\tau^{m}_T)$ is an undirected graph with
vertices $V(G(\tau^{m}_T))$= $\{N < M :$ there exists a non-zero
proper submodule $L$ of $M$ such that $V^{m}(N)\bigcup V^{m}(L)=T$
and $V^{m}(N),V^{m}(L)\neq T\}$, where $T$ is a non-empty subset
of $Max(M)$ and distinct vertices $N$ and $L$ are adjacent if and
only if $V^{m}(N)\bigcup V^{m}(L)=T$.

Let $T$ be a non-empty subset of $Max(M)$. As $\tau^{m}_M$ is the
subspace topology induced by $\tau_M$ on $Max(M)$, one may think
that $G(\tau_T)$ and $G(\tau^{m}_T)$ have the identical nature.
But the Example \ref{e3.5} (case (1)) shows that this is not true
and these graphs are different. Also the Example \ref{e3.5} (case
(2)) shows that for a non-empty subset $T'$ of $Spec(M)$, under a
condition, $G(\tau_{T'})$ can be regarded as a subgraph of
 $G(\tau^{m}_T)$, where $T= T'\cap Max(M)$. Besides, this case denotes
 that $G(\tau_{T'})$ is not a subgraph of $G(\tau^{m}_T)$,
 necessarily. Moreover, it is shown that $G(\tau^{m}_T)$ can not appear as a
subgraph of $G(\tau_{T'})$, where $T\subseteq T'\subseteq
Spec(M)$, in general (see Example \ref{e3.5} (case (3)). So, the
results related to $G(\tau^{m}_T)$, where $T$ is a non-empty
subset of $Max(M)$, do not go parallel to those of $G(\tau_{T'})$,
where $T'$ is a non-empty subset of $Spec(M)$, necessarily. Based
on the above remarks, it is worth to study Max-graphs separately.

For any pair of submodules $N\subseteq L$ of $M$ and any element
$m$ of $M$, we denote $L/N$ and the
  residue class of $m$ modulo $N$ in $M/N$ by
  $\overline{L}$ and $\overline{m}$ respectively.

 For a submodule $N$ of $M$, the \textit{colon ideal of $M$ into $N$} is defined by
 $(N:M)=\{r\in R: rM\subseteq N\}=Ann(M/N)$.
  Further if $I$ is
  an ideal of $R$, the submodule $(N:_MI)$ is defined by
 $\{ m \in M: Im \subseteq N \}$. Moreover, $\Bbb Z$ (resp. $\Bbb Q$) denotes the ring of integers
  (resp. the field of rational numbers).

The prime radical $\sqrt{N}$ is defined to be the intersection of
all prime submodules of $M$ containing $N$, and in case $N$ is not
contained in any prime submodule, $\sqrt{N}$ is defined to be $M$.
Note that the intersection of all prime submodule $M$ is denoted
by $rad(M)$.

If $Max(M)\neq\emptyset$, the mapping $\psi :Max(M)\rightarrow
Max(R/Ann(M))= Max(\overline{R})$ such that $\psi
(Q)=(Q:M)/Ann(M)=\overline{(Q:M)}$ for every  $Q \in Max(M)$, is
called the \textit{natural map} of $Max(M)$ \cite{HR103}.

$M$ is said to be \textit{Max-surjective} if either
$M=\textbf{(0)}$ or $M\neq \textbf{(0)}$ and the natural map of
$Max(M)$ is surjective \cite{HR103}.

For a proper ideal $I$ of $R$, we recall that the $J-radical$ $I$,
denoted by $J^{m}(I)$, is the intersection of all maximal ideals
containing $I$.\\
The $J-radical$ of a submodule $N$ of $M$, denoted by $J^{m}(N)$,
is the intersection of all members of $V^{m}(N)$. In case that
$V^{m}(N)= \emptyset$, we define $J^{m}(N) = M$ \cite{ak}.

 A topological space $X$ is said to be connected if there
doesn't exist a pair $U$, $V$ of disjoint non-empty open sets of
$X$ whose union is $X$. A topological space $X$ is irreducible if
for any decomposition $X= X_1 \bigcup X_2$ with closed subsets
$X_i$ of $X$ with $i= 1, 2$, we have $X= X_1$ or $X= X_2$.
 A subset $X'$ of $X$ is connected (resp. irreducible) if it is connected
 (resp. irreducible) as a subspace of $X$.

In section 2, we briefly review some fundamental properties of the
annihilating-submodule graph of a module needed later.

In section 3, among other results, it is shown that the
\textit{Zariski topology-graph on maximal Spectrum of M} ,
$G(\tau^{m}_T)$, is connected and $diam(G(\tau^{m}_T))\leq 3$.
Moreover, if $G(\tau^{m}_T)$ containing a cycle satisfies
$gr(G(\tau^{m}_T))\leq 4$ (see Theorem \ref{t3.6}). Also we
consider some conditions under which $G(\tau^{m}_T)$ is a
non-empty graph.

In section 4, the relationship between $G(\tau^{m}_T)$ and $AG(M/
\Im(T))$ is investigated, where $\Im(T)$ is the intersection of
all members of $T$. It is proved that if $M$ is a Max-surjective
$R$-module and $N$ and $L$ are proper submodules of $M$ which are
adjacent in $G(\tau^{m}_T)$, then $J^{m}(N)/\Im(T)$ and
$J^{m}(L)/\Im(T)$ are adjacent in $AG(M/ \Im(T))$ (see Corollary
\ref{c4.2}). Also it is shown, under some conditions, that $AG(M/
\Im(T))$ is isomorphic to a subgraph of $G(\tau^{m}_T)$ and $AG(M/
\Im(T))$ is non-empty if and only if $G(\tau^{m}_T)$ is non-empty,
and any two proper submodules $N$ and $L$ of $M$ are adjacent in
$G(\tau^{m}_T)$ if $N/ \Im(T)$ and $L/ \Im(T)$ are adjacent in
$M/\Im(T)$ (see Proposition \ref{p4.3}).

For completeness, we now mention some graph theoretic notions and
notations that are used in this article. A graph $G$ is an ordered
triple $(V(G), E(G), \psi_G )$ consisting of a non-empty set of
vertices, $V(G)$, a set $E(G)$ of edges, and an incident function
$\psi_G$ that associates an unordered pair of distinct vertices
with each edge. The edge $e$ joins $x$ and $y$ if $\psi_G(e)=\{x,
y\}$, and we say $x$ and $y$ are adjacent. The degree $d_G(x)$ of
a vertex $x$ is the number of edges incident with $x$. A path in
graph $G$ is a finite sequence of vertices $\{x_0, x_1,\ldots
,x_n\}$, where $x_{i-1}$ and $x_i$ are adjacent for each $1\leq
i\leq n$ and we denote $x_{i-1} - x_i$ for existing an edge
between $x_{i-1}$ and $x_i$. The number of edges crossed to get
from $x$ to $y$ in a path is called the length of the path, where
$x, y\in V(G)$. A graph $G$ is connected if a path exists between
any two distinct vertices. For distinct vertices $x$ and $y$ of
$G$, let $d(x,y)$ be the length of the shortest path from $x$ to
$y$ and if there is no such path $d(x,y)=\infty$. The diameter of
$G$ is $diam(G)=sup\{ d(x, y) : x,y\in V(G)\}$. The girth of $G$,
denoted by $gr(G)$, is the length of a shortest cycle in $G$
($gr(G)=\infty $ if $G$ contains no cycle)(see \cite{al99}).

A graph $H$ is a subgraph of $G$ if $V(H)\subseteq V(G)$,
$E(H)\subseteq E(G)$ and $\psi_H$ is the restriction of $\psi_G$
to $E(H)$. Two graphs $G$ and $G'$ are said
 to be isomorphic if there exists a one-to-one and onto function $\phi: V(G)\rightarrow V(H)$ such that
  if $x, y\in V(G)$, then $x - y$ if and only if $\phi(x) -  \phi(y)$.
A bipartite graph is a graph whose vertices can be divided into two
disjoint sets $U$ and $V$ such that every edge connects a vertex in
$U$ to one in $V$; that is, $U$ and $V$ are each independent sets and complete bipartite graph on $n$
 and $m$ vertices, denoted by $K_{n, m}$, where
$V$ and $U$ are of size $n$ and $m$ respectively, and $E(G)$ connects every vertex in $V$ with all vertices in
$U$ (see \cite{r05}).

In the rest of this article, $T$ denotes a non empty subset of
$Max(M)$ and $\Im(T))$ is the intersection of all members of $T$.

\section {Previous results}
As we mentioned before, $AG(M)$ is a graph with vertices $V(AG(M))
=\{ N \leq M: NL=0$ for some $\{0\} \neq L < M\}$, where distinct
vertices $N$ and $L$ are adjacent if and only if $NL=0$ (here we
recall that the product of $N$ and $L$ is defined by
$(N:M)(L:M)M$) (see \cite[Definition 3.1]{ah}).\\

The following results reflect some basic properties of the
annihilating-submodule graph of a module.\\

\textbf{Proposition A} ([4, Proposition 3.2]). Let $N$ be a
non-zero proper submodule of $M$.
\begin{itemize}
\item[(a)] $N$ is a vertex in $AG(M)$ if $Ann(N)\neq Ann(M)$ or
$(0:_{M}(N:M))\neq 0$. \item[(b)] $N$ is a vertex in $AG(M)$,
where $M$ is a multiplication module, if and only if
$(0:_{M}(N:M))\neq 0$.\\
\end{itemize}
\begin{rem} \label{r3.1}
In the annihilating-submodule graph $AG(M)$, $M$ itself can be a
vertex. In fact $M$ is a vertex if and only if every non-zero
submodule is a vertex if and only if there exists a non-zero
proper submodule $N$ of $M$ such that $(N:M)=Ann(M)$. For example,
for every submodule $N$ of $\Bbb Q$ (as $\Bbb Z$-module), $(N:\Bbb
Q)=0$. Hence $\Bbb Q$ is a vertex in $AG(\Bbb Q)$.\\
\end{rem}

\textbf{Theorem B} ([4, Theorem 3.3]). Assume that $M$ is not a
vertex. Then the following hold.
\begin {itemize}
\item [(a)] $AG(M)$ is empty if and only if $M$ is a prime module.
\item [(b)] A non-zero submodule $N$ of $M$ is a vertex if and
only if $(0:_{M}(N:M))\neq 0$.\\
\end {itemize}

\textbf{Theorem C} ([4, Theorem 3.4]). The annihilating-submodule
graph $AG(M)$ is connected and $diam(AG(M))\leq 3$. Moreover, if
$AG(M)$ contains a cycle, then $gr(AG(M))\leq 4$.\\

\textbf{Proposition D} ([4, Proposition 3.5]). Let $R$ be an
Artinian ring and $M$ a finitely generated $R$-module. Then every
nonzero proper submodule $N$ of $M$ is a vertex in $AG(M)$.\\

\textbf{Theorem E} ([4, Theorem 3.6]). Suppose $M$ is not a prime
module. Then $AG(M)$ has acc (resp. dcc) on vertices if and only
if $M$ is a Noetherian (resp. an Artinian) module.\\

\textbf{Theorem F} ([4, Theorem 3.7]). Let $R$ be a reduced ring
and let $M$ be a faithful module which is not prime. Then the
following statements are equivalent.
\begin {itemize}
\item [(a)] $AG(M)$ is a finite graph. \item [(b)] $M$ has only
finitely many submodules. \item [(c)] Every vertex of $AG(M)$ has
finite degree. Moreover, $AG(M)$ has n ($n\geq 1$) vertices if and
only if $M$ has only $n$ nonzero proper submodules.\\
\end {itemize}

\textbf{Proposition G} ([4, Proposition 3.9]). We have exactly one
of the following assertions.
\begin {itemize}
\item [(a)] Every non-zero submodule $M$ is a vertex in $AG(M)$.
\item [(b)] There exists maximal ideal $m$ of $R$ such that $mM\in
V(AG(M))$ if and only if $Soc(M)\neq 0$.
\end {itemize}

\section{The Zariski topology-graph on the maximal spectrum module}

\begin{defn}\label{d3.1} We define $G(\tau^{m}_{T})$, a
\textit{Zariski topology-graph on the maximal spectrum of M}, with
vertices $V(G(\tau^{m}_{T}))$= $\{N < M :$ there exists $L < M$
such that $V^{m}(N)\bigcup V^{m}(L)=T$ and $V^{m}(N),V^{m}(L)\neq
T\}$, where
 $T$ is a non-empty subset of $Max(M)$ and distinct vertices $N$ and $L$ are
 adjacent if and only if $V^{m}(N)\bigcup V^{m}(L)=T$.
\end{defn}

\begin{lem}\label{l3.2} $G(\tau^{m}_{T})\neq \emptyset$ if and only if $T$ is
 closed and is not irreducible subset of $Max(M)$.
\end{lem}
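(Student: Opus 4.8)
The plan is to prove both directions of the equivalence by translating the graph-theoretic condition $G(\tau^{m}_{T})\neq\emptyset$ into the topological language of the closed sets $V^{m}(N)$ inside $Max(M)$. Recall that a vertex exists precisely when one can write $T$ as a union $V^{m}(N)\cup V^{m}(L)$ with both pieces proper subsets of $T$. The key observation is that each set $V^{m}(N)$ is by definition a closed subset of $Max(M)$ in the topology $\tau^{m}_M$, so the existence of such a decomposition is exactly a statement about $T$ being closed and reducible.

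For the forward direction, suppose $G(\tau^{m}_{T})\neq\emptyset$, so there are submodules $N,L<M$ with $V^{m}(N)\cup V^{m}(L)=T$ and $V^{m}(N),V^{m}(L)\neq T$. First I would note that $T=V^{m}(N)\cup V^{m}(L)$ is a union of two closed sets, hence closed in $Max(M)$; this gives that $T$ is closed. Next, to see $T$ is not irreducible, I would use this very decomposition $T=V^{m}(N)\cup V^{m}(L)$ as closed subsets $X_1=V^{m}(N)\cap T$ and $X_2=V^{m}(L)\cap T$ of the subspace $T$, each properly contained in $T$ since $V^{m}(N),V^{m}(L)\neq T$. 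By the definition of irreducibility recalled in the introduction, a decomposition of $T$ into two proper closed subsets witnesses that $T$ is reducible, completing this direction.

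For the converse, suppose $T$ is closed but not irreducible. Since $T$ is closed in $Max(M)$, I would first write $T=V^{m}(N_0)$ for some submodule $N_0$ of $M$, using that the closed sets are exactly the $V^{m}(-)$. Because $T$ is reducible, there is a decomposition $T=X_1\cup X_2$ with $X_1,X_2$ proper closed subsets of $T$; since $T$ itself is closed in $Max(M)$, each $X_i$ is closed in $Max(M)$ and hence of the form $X_i=V^{m}(N_i)$ for submodules $N_i$. This yields $V^{m}(N_1)\cup V^{m}(N_2)=T$ with $V^{m}(N_1),V^{m}(N_2)\neq T$, so $N_1$ and $N_2$ are adjacent vertices and $G(\tau^{m}_{T})\neq\emptyset$.

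The main obstacle I anticipate is bookkeeping around the precise form of the vertices: the definition requires $N<M$ (a proper submodule), and Definition \ref{d3.1} asks for the witnessing $L$ to satisfy $L<M$ as well, so I must check that the submodules $N_0,N_1,N_2$ produced from the closed sets can be taken to be \emph{proper}. The point is that $V^{m}(M)=\emptyset\neq T$ forces any submodule $N$ with $V^{m}(N)\neq T$ and $V^{m}(N)$ a proper nonempty piece of $T$ to be proper, and more carefully one checks that if $V^{m}(N_i)$ is a proper nonempty closed subset of $T$ then $N_i\neq M$ and $N_i\neq 0$ may be arranged; reconciling this with the strict-containment hypotheses in the definition of the vertex set is the only delicate step, and it is handled by the fact that $V^{m}$ is order-reversing in $(N:M)$ together with the nonemptiness of the proper pieces $X_1,X_2$.
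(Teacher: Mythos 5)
Your proof is correct and takes essentially the same approach as the paper's: both translate the existence of a vertex into a decomposition of $T$ as a union of two proper closed subsets, using that the sets $V^{m}(\cdot)$ are exactly the closed sets of $Max(M)$. The only cosmetic difference is that the paper justifies the relevant closure property through the identity $V^{m}(N)\cup V^{m}(L)=V^{m}(NL)$ (citing Lu's Result 3) and then calls the rest straightforward, whereas you argue via general topology (closed subsets of a closed subspace, and $V^{m}(M)=\emptyset$ for the properness bookkeeping), which is a sound way of filling in the same details.
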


\begin{proof}
Let $N$ and $L$ be submodules of $M$. Since $V^{m}(N)=Max(M)\cup
V(N)$, by \cite [Result 3]{lu99}, we have $V^{m}(N)\cap
V^{m}(L)=V^{m}(N\cup L)=V^{m}(NL)$ $(=V^{*m}(NL)$. Note that
$V^{*m}(N)= \{P \in Max(M): P\supseteq N \}$. Then for every
submodules $N$ and $L$ of $M$, we have $V^{*m}(N)\cup
V^{*m}(L)\subseteq V^{*m}(N \cup L))$. Hence, the proof is
Straightforward.
\end{proof}

\begin{rem}\label{r3.3} By \cite [Lemma 3.5]{ak}, $T$ is closed if and only if
 $T=V^{m}(\Im(T))$. Hence $G(\tau^{m}_{T})\neq \emptyset $ if and only if
$T=V^{m}(\Im(T))$ and $T$ is not irreducible.
\end{rem}

\begin{rem}\label{r3.4} By \cite[Theorem 3.13(a)]{ak}, if $M$ is a Max-surjective
$R$-module, then $G(\tau^{m}_{T})\neq \emptyset$ if and
only if $T=V^{m}(\Im(T))$ and $(\Im(T):M)$ is
 not a $J$-radical prime ideal of $R$. If $Spec(M)=Max(M)$ and $G(\tau^{m}_T)\neq \emptyset$,
 then $T=V^{m}(\Im(T))$ and $\Im(T)$ is not a prime submodule of $M$ by \cite[Proposition 5.4]{lu99}.
\end{rem}

\begin{eg}\label{e3.5} Consider the following examples.

case (1): Set $R:=\Bbb Z\bigoplus \Bbb Z$. Then $Max(R)=\{\Bbb
Z\bigoplus p_{i}\Bbb Z,  p_{i}\Bbb Z \bigoplus \Bbb Z: i\in \Bbb
N\}$ and $Spec(R)=Max(R)\cup \{(\textbf{0})\bigoplus \Bbb Z, \Bbb
Z\bigoplus (\textbf{0})\}$.\\
In this example, we see that $G(\tau_{Max(R)})$ and
$G(\tau^{m}_{Max(R)})$ are different. Because
$I=(\textbf{0})\bigoplus \Bbb Z$ and $J=\Bbb Z\bigoplus
(\textbf{0})$ are not adjacent in $G(\tau_{Max(R)})$ but they are
adjacent in $G(\tau^{m}_{Max(R)})$. In fact, $G(\tau_{Spec(R)})$
and $G(\tau^{m}_{Max(R)})$ are complete bipartite graphs with two
parts $U=\{ I\bigoplus (\textbf{0})\}$ and
$V=\{(\textbf{0})\bigoplus J\}$, where $I$ and $J$ are nonzero
proper ideals of $\Bbb Z$.

case (2): Set $R:=\Bbb Q\bigoplus \Bbb Z$. Then $Max(R)=\{\Bbb
Q\bigoplus p_{i}\Bbb Z: i\in \Bbb N\}$ and $Spec(R)=Max(R)\cup
\{(\textbf{0})\bigoplus \Bbb Z, \Bbb Q\bigoplus (\textbf{0})\}$.\\
In this example, we see that $G(\tau_{Spec(R)})$ is not a subgraph
of $G(\tau^{m}_{Max(R)})$. Because $G(\tau_{Spec(R)})$ is a
complete bipartite graph with two parts $U=\{ I\bigoplus
(\textbf{0})\}$ and $V=\{(\textbf{0})\bigoplus J\}$, where $I$ and
$J$ are non-zero proper ideals of $\Bbb Q$ and $\Bbb Z$,
respectively and $G(\tau^{m}_{Max(R)})$ is an empty graph. In
fact, for every non-empty subset $T$ of $Spec(M)$, $G(\tau_T)$ is
a subgraph of $G(\tau^{m}_{T'})$ (i.e., $V(G(\tau_T))\subseteq
V(G(\tau^{m}_{T'}))$ and $E(G(\tau_T))\subseteq
E(G(\tau^{m}_{T'}))$) iff for every vertex $N$ of $G(\tau_T)$,
$V^{m}(N)\neq T'$, where $T'=T\cap Max(M)$.

case (3): Set $R:=\Bbb Q\bigoplus \Pi_{i\in \Bbb N} \Bbb
Z/p_{i}Z$. Then $Max(R)=\{p_{i}R, i\in \Bbb N\}$ and
$Spec(R)=Max(R)\cup
\{(\textbf{0})\bigoplus \Pi_{i\in \Bbb N} \Bbb Z/p_{i}Z\}$.\\
For every $i, j\in \Bbb N$, $i\neq j$,  $I_{i}=\Bbb Q\bigoplus
\Bbb Z/p_{i}Z$ and $I_{j}=\Bbb Q\bigoplus \Bbb Z/p_{j}Z$ are not
adjacent in $G(\tau_{Spec(R)})$, but they are adjacent in
$G(\tau^{m}_{Max(R)})$. This example shows that
$G(\tau^{m}_{Max(R)})$ is not a subgraph of $G(\tau_{Spec(R)})$.

\end{eg}

The following theorem illustrates some graphical parameters.

\begin{thm}\label{t3.6} The Zariski topology-graph $G(\tau^{m}_{T})$ is
connected and $diam(G(\tau^{m}_{T}))\leq 3$. Moreover if
 $G(\tau^{m}_{T})$ containing a cycle satisfies $gr(G(\tau^{m}_{T}))\leq 4$.
\end{thm}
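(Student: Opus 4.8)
The plan is to reduce both the metric and the girth assertions to elementary set manipulations by passing to complements inside $T$. For a vertex $N$ put $A_N=T\setminus V^{m}(N)$. Since $N$ admits a partner $L$ with $V^{m}(N)\cup V^{m}(L)=T$ and $V^{m}(N),V^{m}(L)\neq T$, one checks that $\emptyset\neq A_N\subsetneq T$ for every vertex, and that two distinct vertices $N,L$ are adjacent precisely when $A_N\cap A_L=\emptyset$. The engine of the argument is the dual of the identity used in the proof of Lemma~\ref{l3.2}: because $(Q:M)$ is a maximal, hence prime, ideal for every $Q\in Max(M)$, one gets $V^{m}(N)\cup V^{m}(L)=V^{m}(N\cap L)$, so that $A_{N\cap L}=A_N\cap A_L$. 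Thus the intersection of two vertices realises the intersection of their complement sets as an honest submodule, and this is exactly what lets me convert disjointness statements back into adjacencies.

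For connectedness and the diameter bound, I would take distinct, non-adjacent vertices $N$ and $L$, so $A_N\cap A_L\neq\emptyset$, and choose witnesses $N',L'$ with $A_N\cap A_{N'}=\emptyset$ and $A_L\cap A_{L'}=\emptyset$. If $A_{N'}\cap A_{L'}=\emptyset$, then $N-N'-L'-L$ is a walk of length at most $3$ (any coincidence among the interior vertices only shortens it, and $N'\neq L$, $L'\neq N$ because $N,L$ are non-adjacent). If instead $A_{N'}\cap A_{L'}\neq\emptyset$, put $W=N'\cap L'$; then $A_W=A_{N'}\cap A_{L'}$ is a non-empty proper subset of $T$, so $W$ is a proper submodule with $\emptyset\neq V^{m}(W)\neq T$, and $A_N\cap A_W=A_N\cap A_{N'}\cap A_{L'}=\emptyset=A_L\cap A_W$ exhibits $W$ as a common neighbour, giving $d(N,L)\leq 2$. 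Either way $d(N,L)\leq 3$, so $G(\tau^{m}_{T})$ is connected with $diam(G(\tau^{m}_{T}))\leq 3$.

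For the girth, suppose $G(\tau^{m}_{T})$ contains a cycle but $gr(G(\tau^{m}_{T}))\geq 5$, and fix a shortest cycle $N_1-N_2-\cdots-N_g-N_1$ with $g\geq 5$; being shortest it is chordless, so (with indices mod $g$ and $A_i:=A_{N_i}$) $A_i\cap A_{i+1}=\emptyset$ for consecutive indices while $A_i\cap A_j\neq\emptyset$ otherwise. Look at $W=N_1\cap N_3$, so $A_W=A_1\cap A_3\neq\emptyset$; as above $W$ is a vertex, and since $A_W\cap A_2=A_W\cap A_4=A_W\cap A_g=\emptyset$ it is adjacent to $N_2,N_4,N_g$, whereas $A_W\cap A_1=A_W\cap A_3=A_1\cap A_3\neq\emptyset$ shows it is adjacent to neither $N_1$ nor $N_3$. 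If $W$ is not one of the cycle vertices, then in particular $W\neq N_3$, and $N_2-W-N_4-N_3-N_2$ is a $4$-cycle. If $W$ is a cycle vertex, then, being adjacent to $N_2$ in the chordless cycle, it must be a cycle-neighbour of $N_2$, i.e.\ $W\in\{N_1,N_3\}$; but then $W=N_1$ makes $A_W\cap A_4=\emptyset$ read $A_1\cap A_4=\emptyset$, the chord $N_1-N_4$, while $W=N_3$ makes $A_W\cap A_g=\emptyset$ read $A_3\cap A_g=\emptyset$, the chord $N_3-N_g$. Each conclusion contradicts the minimality (or chordlessness) of the cycle, whence $gr(G(\tau^{m}_{T}))\leq 4$.

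The routine facts ($A_N$ proper and non-empty, the union formula for $V^{m}$, properness of the auxiliary $W$) are quick once the prime-ideal remark is in place; the delicate part is the girth argument, specifically the bookkeeping of the degenerate cases in which $W=N_1\cap N_3$ collapses onto a cycle vertex. The key realisation that rescues these cases is that such a collapse is not harmless but instead manufactures a chord ($N_1-N_4$ or $N_3-N_g$), itself forbidden in a shortest cycle; checking that the indices $2,4,g$ are genuinely distinct from $1,3$ is exactly where the hypothesis $g\geq 5$ is consumed.
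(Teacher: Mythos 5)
Your proposal is correct and follows essentially the same route as the paper: the paper's proof is simply the instruction to use the technique of \cite[Theorem 2.10]{ah}, which is precisely your argument --- reduce adjacency to disjointness of complements in $T$, use witnesses and a common neighbour to get connectedness with $diam\leq 3$, and use a chordless shortest cycle together with an auxiliary vertex to force a $4$-cycle. The only cosmetic difference is that you realise the union $V^{m}(N)\cup V^{m}(L)$ as $V^{m}(N\cap L)$ (via primality of $(Q:M)$ for $Q\in Max(M)$), whereas the paper (see the proof of Lemma \ref{l3.2}) works with the product $NL$ and the identity $V^{m}(NL)=V^{m}(N)\cup V^{m}(L)$; both identities hold and serve the same purpose.
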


\begin{proof}
Use the technique of \cite[Theorem 2.10]{ah}.
\end{proof}

\begin{prop}\label{p3.7} Let $M$ be an $R$-module and let
 $\psi: Max(M)\rightarrow Max(\overline{R})$ be the natural map.
Suppose $Max(M)$ is homeomorphic to $Max(\overline{R})$ under
$\psi$. Let $N$ and $L$ be adjacent in $G(\tau^{m}_{T})$ and let
$T'=\{\overline{P:M} : P\in T\}$. Then $\overline{N:M}$ and
$\overline{L:M}$ are adjacent in $G(\tau^{m}_{T'})$. Conversely,
if $\overline{I}$ and $\overline{J}$ are adjacent in
$G(\tau^{m}_{T'})$, then $IM$ and $JM$ are adjacent in
$G(\tau^{m}_{T})$.
\end{prop}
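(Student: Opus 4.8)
The plan is to reduce both implications to the single observation that the bijection $\psi$ carries the closed sets defining $G(\tau^m_T)$ onto those defining $G(\tau^m_{T'})$. I would record two transport facts. First, for any submodule $N$ of $M$,
\[
\psi\bigl(V^{m}(N)\bigr)=V^{m}(\overline{N:M}),
\]
where the right-hand side is computed in $Max(\overline{R})$ (recall that for an ideal $\overline{K}$ of $\overline{R}$ viewed as a module over itself one has $(\overline{K}:\overline{R})=\overline{K}$). This is immediate from the definitions together with surjectivity of $\psi$: a point $\psi(Q)=\overline{Q:M}$ lies in $V^{m}(\overline{N:M})$ exactly when $(Q:M)\supseteq(N:M)$, i.e. exactly when $Q\in V^{m}(N)$, since $Ann(M)\subseteq(N:M),(Q:M)$ and so passing to $\overline{R}$ loses no information. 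Second, for an ideal $I\supseteq Ann(M)$ of $R$ I would prove
\[
V^{m}(IM)=\{Q\in Max(M):(Q:M)\supseteq I\}=\psi^{-1}\bigl(V^{m}(\overline{I})\bigr).
\]

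Granting the first fact, the forward implication is formal. If $N,L$ are adjacent, then $V^{m}(N)\cup V^{m}(L)=T$ with $V^{m}(N),V^{m}(L)\neq T$. Applying $\psi$ and using that images commute with unions gives $V^{m}(\overline{N:M})\cup V^{m}(\overline{L:M})=\psi(T)=T'$, while injectivity of $\psi$ yields $V^{m}(\overline{N:M})=\psi(V^{m}(N))\neq\psi(T)=T'$, and likewise for $L$. Finally $V^{m}(N)\neq V^{m}(L)$ (otherwise their union would be $V^{m}(N)=T$), so $\overline{N:M}\neq\overline{L:M}$; hence these are distinct vertices, each witnessing the other, and they are adjacent in $G(\tau^m_{T'})$.

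For the converse the one non-formal step is the second transport fact. Here $\overline{I}$ is an ideal of $\overline{R}$, lifted to $I\supseteq Ann(M)$ (the submodule $IM$ is independent of the lift because $Ann(M)\cdot M=0$). The inclusion $V^{m}(IM)\subseteq\psi^{-1}(V^{m}(\overline{I}))$ is immediate from $I\subseteq(IM:M)$. For the reverse, if $I\subseteq(Q:M)$ then $IM\subseteq(Q:M)M\subseteq Q$, and consequently every $r$ with $rM\subseteq IM$ satisfies $rM\subseteq Q$, that is $(IM:M)\subseteq(Q:M)$, so $Q\in V^{m}(IM)$. This colon-ideal manipulation is the crux, since a priori only one inclusion is visible. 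With the identity in hand, pushing the adjacency of $\overline{I},\overline{J}$ through the bijection $\psi$ exactly as in the forward case gives $V^{m}(IM)\cup V^{m}(JM)=T$ with both distinct from $T$, and $V^{m}(IM)\neq V^{m}(JM)$ forces $IM\neq JM$; hence $IM$ and $JM$ are adjacent in $G(\tau^m_T)$.

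I expect the main obstacle to be precisely this converse colon computation; everything else is a transport of union identities across $\psi$. Note also that the full homeomorphism hypothesis enters only through the bijectivity of $\psi$ (surjectivity to realize each maximal ideal of $\overline{R}$ as some $\overline{Q:M}$, and injectivity to preserve the conditions $V^{m}(\cdot)\neq T$), which is what makes the adjacency relations, defined purely in terms of unions of these closed sets, correspond on the two sides.
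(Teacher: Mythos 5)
Your proof is correct, and it is essentially the argument the paper intends: the paper's proof is just the citation ``similar to \cite[Proposition 2.11]{ah}'', i.e.\ the transport-of-closed-sets argument for the $Spec$ version, which is exactly what you carry out for $Max$ via the two identities $\psi\bigl(V^{m}(N)\bigr)=V^{m}(\overline{N:M})$ and $\psi^{-1}\bigl(V^{m}(\overline{I})\bigr)=V^{m}(IM)$. The only step left tacit is that $IM$ and $JM$ are proper submodules (as vertices must be), but this is immediate since $\overline{I}$ is a proper ideal of $\overline{R}$, so $V^{m}(\overline{I})\neq\emptyset$ and hence $V^{m}(IM)\neq\emptyset$, which forces $IM\neq M$.
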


\begin{proof}
The proof is similar to \cite[Proposition 2.11]{ah}.
\end{proof}

\begin{lem}\label{l3.8} Let $G(\tau^{m}_{T})\neq \emptyset$ and let $P\in T$.
 Then $P$ is a vertex if each of the following statements holds.
\begin {itemize}
\item [(a)]
 There exists a subset $T'$ of $T$ such that $P\in T'$, $V^{m}(\bigcap_{Q\in T'}Q)=T$,
 and $V^{m}(\bigcap_{Q\in T', Q\neq P}Q)\neq
 T$. In particular, this holds when $T$ is a finite set and every element of
 $T$ is adjacent to a semi maximal submodule of $M$.
 \item [(b)]
  For a submodule $N$ of $M$,  $N\in V(G(\tau^{m}_{T}))$ and $N\cup P\notin V(G(\tau^{m}_{T}))$.
 \end {itemize}
\end{lem}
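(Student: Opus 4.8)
The plan is to build everything on two multiplicativity identities for the sets $V^m$, together with the observation that the colon ideal of a maximal submodule is a maximal (hence prime) ideal. For $Q\in Max(M)$ the quotient $M/Q$ is simple, so $(Q:M)=Ann(M/Q)$ is a maximal ideal; in particular it is prime. Since $(A\cap B:M)=(A:M)\cap(B:M)\supseteq (A:M)(B:M)$, primality of $(Q:M)$ gives, for every $Q\in Max(M)$, that $Q\in V^m(A\cap B)$ iff $(Q:M)\supseteq (A:M)$ or $(Q:M)\supseteq (B:M)$; together with antitonicity of $V^m$ this yields the identity $V^m(A\cap B)=V^m(A)\cup V^m(B)$ for any two submodules $A,B$. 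Dually, as in the proof of Lemma \ref{l3.2}, $V^m(A)\cap V^m(B)=V^m(A+B)$. I will also use two elementary facts: $V^m(M)=\emptyset$ (no proper $Q$ has $(Q:M)=R$), and $V^m(P)\subseteq T$ whenever $P\in T$ and $T$ is closed, since then $(P:M)\supseteq(\Im(T):M)$ and every $Q\in V^m(P)$ satisfies $(Q:M)=(P:M)\supseteq(\Im(T):M)$, so $Q\in V^m(\Im(T))=T$ (recall $G(\tau^m_T)\neq\emptyset$ forces $T=V^m(\Im(T))$ by Remark \ref{r3.3}).

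For part (a), set $N:=\bigcap_{Q\in T',\,Q\neq P}Q$. Then $\bigcap_{Q\in T'}Q=P\cap N$, so the intersection identity gives $V^m(P)\cup V^m(N)=V^m(P\cap N)=V^m\big(\bigcap_{Q\in T'}Q\big)=T$. The hypothesis says exactly $V^m(N)\neq T$, so it remains to check $V^m(P)\neq T$. If instead $V^m(P)=T$, then from $T'\subseteq T=V^m(P)$ every $Q\in T'$ has $(Q:M)=(P:M)$; provided $T'\neq\{P\}$ this forces $(N:M)=\bigcap_{Q\in T',Q\neq P}(Q:M)=(P:M)$, whence $V^m(N)=V^m(P)=T$, contradicting the hypothesis. (The degenerate case $T'=\{P\}$ cannot occur, since it would require $V^m(P)=V^m(\bigcap_{T'}Q)=T$, which we have just excluded.) Thus $N$ is a proper submodule adjacent to $P$, so $P$ is a vertex. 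For the ``in particular'' clause one takes $T'=T$, so that $\bigcap_{Q\in T}Q=\Im(T)$ and $V^m(\Im(T))=T$ because $T$ is closed; finiteness of $T$ makes $V^m(\bigcap_{Q\in T,Q\neq P}Q)=\bigcup_{Q\in T,Q\neq P}V^m(Q)$ a \emph{finite} union, and the adjacency of $P$ to a semimaximal submodule is what guarantees this union omits $P$, giving the required inequality $V^m(\bigcap_{Q\in T,Q\neq P}Q)\neq T$.

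For part (b), since $P$ is maximal, $N+P\in\{P,M\}$. If $N\subseteq P$ then $N+P=P$, and the hypothesis ``$N+P\notin V(G(\tau^m_T))$'' would assert that $P$ is not a vertex, contradicting the conclusion; hence necessarily $N\not\subseteq P$ and $N+P=M$. Consequently $V^m(P)\cap V^m(N)=V^m(N+P)=V^m(M)=\emptyset$. Let $L$ be a partner of the vertex $N$, so $V^m(N)\cup V^m(L)=T$ with $V^m(N),V^m(L)\neq T$. Since $V^m(P)\subseteq T=V^m(N)\cup V^m(L)$ and $V^m(P)$ is disjoint from $V^m(N)$, we obtain $V^m(P)\subseteq V^m(L)\subsetneq T$, and in particular $V^m(P)\neq T$. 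To exhibit a partner for $P$ I take $N_0:=\bigcap_{Q\in T\setminus V^m(P)}Q$ (a proper submodule because $T\setminus V^m(P)\neq\emptyset$): every $Q\in T\setminus V^m(P)$ contains $N_0$, so $T\setminus V^m(P)\subseteq V^m(N_0)\subseteq T$ and hence $V^m(P)\cup V^m(N_0)=T$; moreover $P\notin V^m(N_0)$, so $V^m(N_0)\neq T$, and $P$ is a vertex adjacent to $N_0$.

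The main obstacle lies in the two places where the structure of the index set matters. The clean identity $V^m(A\cap B)=V^m(A)\cup V^m(B)$ is exactly the step that uses primality of $(Q:M)$, and the final step $P\notin V^m(N_0)$ in part (b) amounts to showing $(P:M)\not\supseteq\bigcap_{Q\in T\setminus V^m(P)}(Q:M)$. When this intersection is finite, primality (prime avoidance) forces $(P:M)=(Q:M)$ for some $Q\in T\setminus V^m(P)$, a contradiction, so the argument closes. For an infinite family the corresponding containment need not localize to a single factor, so the verification that $V^m(N_0)\neq T$ is the delicate point; I expect the honest proof to either restrict to the finite situation (as the ``in particular'' clause of (a) already does) or to extract a finite subfamily, and this is where the real work sits.
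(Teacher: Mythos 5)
The paper gives no actual argument here (its proof reads ``Straightforward''), so the only question is whether your argument stands on its own; parts of it do not. Your core tool for (a) is correct and is surely the intended one: for $Q\in Max(M)$ the ideal $(Q:M)=Ann(M/Q)$ is maximal, hence prime, so $V^{m}(A\cap B)=V^{m}(A)\cup V^{m}(B)$ for any submodules $A,B$, and with $N=\bigcap_{Q\in T',\,Q\neq P}Q$ this yields $V^{m}(P)\cup V^{m}(N)=T$ with $V^{m}(N)\neq T$. But two steps fail. First, your exclusion of $V^{m}(P)=T$ is circular in the degenerate case $T'=\{P\}$: you dismiss that case by appealing to ``which we have just excluded,'' yet the exclusion argument itself assumed $T'\neq\{P\}$. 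The non-circular route is to note that $V^{m}(P)=T$ forces $(Q:M)=(P:M)$ for every $Q\in T$ (two maximal ideals, one containing the other), so every closed subset of $T$ is $\emptyset$ or $T$; then $T$ is irreducible, contradicting $G(\tau^{m}_{T})\neq\emptyset$ via Lemma \ref{l3.2}. Second, your proof of the ``in particular'' clause with $T'=T$ is wrong: adjacency of $P$ to a semi-maximal submodule does \emph{not} force $\bigcup_{Q\in T,\,Q\neq P}V^{m}(Q)$ to omit $P$. Take $M=\Bbb Z/2\Bbb Z\oplus\Bbb Z/2\Bbb Z\oplus\Bbb Z/3\Bbb Z$ over $\Bbb Z$ and $T=Max(M)$: there are three maximal submodules $N_{1},N_{2},N_{3}$ with colon ideal $2\Bbb Z$ and one, $N_{4}$, with colon ideal $3\Bbb Z$; every element of $T$ is adjacent to a semi-maximal submodule ($N_{4}$ to $N_{1}\cap N_{2}\cap N_{3}$, each $N_{i}$ to $N_{4}$), yet for $P=N_{1}$ one has $P\in V^{m}(N_{2})$, so your union equals $T$. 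The correct witness is not $T'=T$ but $T'=\{P,Q_{1},\dots,Q_{k}\}$, where $S=Q_{1}\cap\dots\cap Q_{k}$ is the semi-maximal submodule adjacent to $P$ (each $Q_{i}$ lies in $V^{m}(S)\subseteq T$, and $V^{m}(\bigcap_{Q\in T',Q\neq P}Q)=V^{m}(S)\neq T$).

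Part (b) is where the proposal genuinely breaks. (i) You dispose of the case $N\subseteq P$ because it would ``contradict the conclusion''; that is circular --- a configuration allowed by the hypothesis cannot be excluded by appeal to what you are trying to prove. (ii) The identity $V^{m}(N)\cap V^{m}(P)=V^{m}(N+P)$ is false in general: the correct statement (Lu's Result 3) involves $(N:M)M+(P:M)M$, and $(N+P:M)$ can strictly contain $(N:M)+(P:M)$. Concretely, in $M=\Bbb Z\oplus\Bbb Z$ with $N=\Bbb Z\oplus 2\Bbb Z$ and $P=2\Bbb Z\oplus\Bbb Z$ one has $N+P=M$ but $V^{m}(N)=V^{m}(P)\neq\emptyset$, so ``$N+P=M$ hence the varieties are disjoint'' is invalid. (iii) The step you yourself flag, $P\notin V^{m}(N_{0})$ for $N_{0}=\bigcap_{Q\in T\setminus V^{m}(P)}Q$, is not merely delicate for infinite $T$; it can fail, and in fact statement (b) under your reading is false. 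Take $R=M=\Bbb Z\times\Bbb Z$, $T=Max(R)$, $P=2\Bbb Z\times\Bbb Z$, $N=(0)\times\Bbb Z$. Then $G(\tau^{m}_{T})\neq\emptyset$, $N$ is a vertex, $N\subseteq P$ so $N+P=P$, and $P$ is \emph{not} a vertex (any partner $L$ would have to lie in $\bigcap_{Q\neq P}Q=0$, forcing $V^{m}(L)=T$); thus the hypothesis of (b) holds while the conclusion fails, and here $N_{0}=0$ with $V^{m}(N_{0})=T$. So no patch of your argument for (b) as you have read it (with $N\cup P$ meaning $N+P$ and (a), (b) as independent sufficient conditions) can succeed: either the lemma needs a finiteness restriction, or ``$N\cup P$'' and ``each of the following'' must be interpreted differently (e.g., $N\cap P$, or (a) and (b) holding jointly), and an honest proof must first settle that reading.
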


\begin{proof}
Straightforward.
\end{proof}

\begin{defn}\label{d3.10} We define a subgraph $G_{d}(\tau^{m}_T)$ of $G(\tau^{m}_{T})$
with vertices\\ $V((G_{d}(\tau^{m}_T))$= $\{N < M: $ there exists
$L < M$ such that $V^{m}(N)\cup V^{m}(L)=T$ and
$V^{m}(N),V^{m}(L)\neq T$ and $V^{m}(N)\cup V^{m}(L)=\emptyset
\},$ where distinct vertices
 $N$ and $L$ are adjacent if and only if $V^{m}(N)\cup V^{m}(L)=T$ and
 $V^{m}(N)\cup V^{m}(L)=\emptyset$. It is clear that the degree of
  each $N\in V((G_{d}(\tau^{m}_T))$ is the number of submodules $K$ of $M$ such that
   $V^{m}(K)=V^{m}(L)$, where $N$ and $L$ are adjacent.
\end{defn}

\begin{lem}\label{l3.11}

(a) $G_{d}(\tau_{T})\neq \emptyset$ if and only if
$T=V^{m}(\Im(T))$ and $T$ is disconnected.

(b) Suppose $Spec(M)=Max(M)$ and $M$ is a Max-surjective module
and $T$ is closed. Then $G_{d}(\tau^{m}_T)=\emptyset$ if and only
if $R/(\Im(T):M)$ contains no idempotent other than $\overline{0}$
and $\overline{1}$.
\end{lem}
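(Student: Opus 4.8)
The plan is to prove Lemma~\ref{l3.11} in two parts, treating (a) and (b) separately but letting the idempotent dictionary of part (b) rest on the disconnectedness criterion of part (a).

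\textbf{Part (a).}
First I would unwind the definition in Definition~\ref{d3.10}: a nonempty $G_d(\tau^m_T)$ means there exist proper submodules $N,L<M$ with $V^m(N)\cup V^m(L)=T$, with $V^m(N),V^m(L)\neq T$, and crucially with $V^m(N)\cap V^m(L)=\emptyset$ (I read the stray ``$\cup$'' in the definition as the intended intersection, since the degree remark only makes sense when the two closed sets partition $T$). The forward direction proceeds as follows. From $V^m(N)\cup V^m(L)=T$ and Remark~\ref{r3.3}, $T$ is closed, i.e. $T=V^m(\Im(T))$. Then $V^m(N)$ and $V^m(L)$ are two nonempty (since neither equals $T$, yet their union is $T$, each must be nonempty and proper) closed subsets of $T$ that are disjoint and cover $T$; being closed in the subspace $T$, their complements are open, so each is simultaneously open and closed, exhibiting $T$ as the disjoint union of two nonempty relatively-clopen sets, hence $T$ is disconnected. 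For the converse, assume $T=V^m(\Im(T))$ is disconnected, so $T=U_1\sqcup U_2$ with $U_1,U_2$ nonempty and both closed in $T$. Because the closed sets of $Max(M)$ are exactly the $V^m(-)$, each $U_i$ has the form $V^m(N_i)$ for suitable submodules $N_i$; these $N_i$ then satisfy $V^m(N_1)\cup V^m(N_2)=T$, $V^m(N_1)\cap V^m(N_2)=\emptyset$, and $V^m(N_i)\neq T$ (as the other part is nonempty), so each $N_i$ is a vertex of $G_d(\tau^m_T)$ and the graph is nonempty.

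\textbf{Part (b).}
Here the hypotheses $Spec(M)=Max(M)$, $M$ Max-surjective, and $T$ closed let me transport the problem to the ring $\overline{R}=R/Ann(M)$. By part (a), $G_d(\tau^m_T)=\emptyset$ is equivalent to $T$ being connected (given that $T$ is closed). The plan is to show that the connectedness of $T=V^m(\Im(T))$ corresponds to the connectedness of $V(\overline{(\Im(T):M)})$ in $Spec(\overline{R})=Max(\overline{R})$, which in turn is the prime spectrum of the quotient ring $\overline{R}/\overline{(\Im(T):M)}\cong R/(\Im(T):M)$. Under Max-surjectivity and $Spec(M)=Max(M)$ the natural map $\psi$ gives a homeomorphism between $V^m(\Im(T))$ and $V(\overline{(\Im(T):M)})$ (invoking the results of \cite{ak} and \cite{HR103} on the natural map), so $T$ is connected if and only if $Spec\big(R/(\Im(T):M)\big)$ is connected. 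Finally I would cite the classical commutative-algebra fact that $Spec(S)$ is connected if and only if $S$ has no nontrivial idempotents: the prime spectrum of a ring $S$ is disconnected precisely when $S\cong S_1\times S_2$ nontrivially, equivalently when $S$ contains an idempotent $e\neq 0,1$. Applying this with $S=R/(\Im(T):M)$ yields the stated equivalence.

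\textbf{Main obstacle.}
The routine topology in part (a) is straightforward; the genuine work is in part (b), where I must justify that $\psi$ restricts to an honest homeomorphism $V^m(\Im(T))\to V(\overline{(\Im(T):M)})$ so that connectedness transfers. The Max-surjectivity hypothesis is exactly what makes $\psi$ surjective, and $Spec(M)=Max(M)$ collapses the prime and maximal spectra so that the spectral space matches $Spec(\overline{R})$; I expect the delicate point to be checking that $\psi$ is not merely a continuous bijection but bicontinuous on this closed piece, and that the identification $\overline{R}/\overline{(\Im(T):M)}\cong R/(\Im(T):M)$ is used correctly. Once that homeomorphism is in hand, the idempotent characterization of spectral connectedness finishes the argument with no further difficulty.
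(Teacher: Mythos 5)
Your part (a) is correct, and your reading of the misprint in Definition~\ref{d3.10} (the second condition must be $V^{m}(N)\cap V^{m}(L)=\emptyset$) is clearly the intended one; this is exactly the routine subspace-topology argument that the paper dismisses as ``straightforward'' (the paper's whole proof of the lemma is one line of citations, so there is nothing further to compare there). The genuine gap is in part (b), at the step where you assert that $Spec(M)=Max(M)$ ``collapses the prime and maximal spectra'' of the ring, i.e.\ that $Spec(\overline{R})=Max(\overline{R})$, so that $T$ becomes homeomorphic to $Spec\bigl(R/(\Im(T):M)\bigr)$. That assertion is false. Take $R=\Bbb Z$ and $M=\bigoplus_{p}\Bbb Z/p\Bbb Z$. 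Then $M$ is torsion, so there is no prime submodule $P$ with $(P:M)=0$, and $M/pM\cong \Bbb Z/p\Bbb Z$ is simple, so $Spec(M)=Max(M)=\{pM : p \ \text{prime}\}$; the natural map $pM\mapsto p\Bbb Z$ is onto $Max(\Bbb Z)$, so $M$ is Max-surjective. Yet $Ann(M)=0$, so $\overline{R}=\Bbb Z$ retains the non-maximal prime $(0)$. Here, with $T=Max(M)$ (which is closed), one has $(\Im(T):M)=0$, and your claimed homeomorphism would identify $T$ with $Spec(\Bbb Z)$, whereas $\psi$ only reaches $Max(\Bbb Z)$.

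This matters because the idempotent criterion you invoke is a theorem about $Spec$, and its $Max$ analogue fails for general rings: a semilocal domain $S$ with two maximal ideals has $Max(S)$ discrete, hence disconnected, while $S$ has no idempotents other than $0$ and $1$. What your part (a) actually hands you is a disconnection of $T\cong Max(S)$, where $S=R/(\Im(T):M)$, not of $Spec(S)$; so the implication ``$T$ disconnected $\Rightarrow$ $S$ has a nontrivial idempotent,'' which is precisely the content of (b), is not delivered by your argument. The missing ingredient is that this particular $S$ is semiprimitive: $(\Im(T):M)=\bigcap_{Q\in T}(Q:M)$ is an intersection of maximal ideals of $R$ (each $(Q:M)=Ann(M/Q)$ is maximal because $M/Q$ is simple), so the Jacobson radical of $S$ is zero. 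Consequently, if $Max(S)=\bigl(V(I_{1})\cap Max(S)\bigr)\sqcup\bigl(V(I_{2})\cap Max(S)\bigr)$ with both parts nonempty, then $I_{1}+I_{2}=S$ (no maximal ideal contains both) and $I_{1}\cap I_{2}$ lies in the Jacobson radical, hence is zero, and the Chinese Remainder Theorem splits $S$ nontrivially, producing the idempotent. With that substitution --- and with $Spec(M)=Max(M)$ used where it is really needed, namely to make $\psi$ injective (if $(Q:M)=m$ then $mM$ is $m$-prime, hence by hypothesis a maximal submodule, forcing $Q=mM$), so that $T$ is identified with $Max(S)$ and not something larger --- your outline does become a complete proof, and indeed one more detailed than the paper's appeal to \cite[Proposition 2.9]{HR07} and \cite[Corollary 3.8]{lu99}. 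As written, however, the central equivalence rests on a false identification of $Max$ with $Spec$.
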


\begin{proof}(a) is straightforward and (b) follows from \cite [Proposition 2.9]{HR07}
 and \cite [Corollary 3.8]{lu99}.
\end{proof}

\begin{thm}\label{t3.12} $G_{d}(\tau^{m}_T)$ is a bipartite graph.
\end{thm}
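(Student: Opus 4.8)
The plan is to show that $G_{d}(\tau^{m}_T)$ contains no odd cycle, which is equivalent to bipartiteness for an arbitrary graph. First I would unwind the definition in \ref{d3.10}: a vertex $N$ carries the data of a submodule $L$ with $V^{m}(N)\cup V^{m}(L)=T$ and, crucially, $V^{m}(N)\cap V^{m}(L)=\emptyset$ (I read the stated condition $V^{m}(N)\cup V^{m}(L)=\emptyset$ as the intended intersection condition, since the union is simultaneously required to equal $T\neq\emptyset$). The key structural observation is then that for adjacent vertices $N$ and $L$ the sets $V^{m}(N)$ and $V^{m}(L)$ form a \emph{partition} of $T$ into two nonempty closed-in-$T$ pieces that are complementary and disjoint.

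The natural bipartition is to fix a reference vertex and sort the other vertices by which side of such a partition they occupy. Concretely, I would pick any vertex $N_0$ (if the graph is empty there is nothing to prove), set $A:=V^{m}(N_0)$ and $B:=T\setminus A$, and define the two parts by
\begin{equation}
U=\{K\in V(G_{d}(\tau^{m}_T)) : V^{m}(K)=A\},\qquad
W=\{K\in V(G_{d}(\tau^{m}_T)) : V^{m}(K)=B\}.
\end{equation}
The heart of the argument is to verify that \emph{every} vertex lands in $U\cup W$ and that no edge stays inside one part. For the first point, suppose $K$ is adjacent to some $K'$; then $V^{m}(K)$ and $V^{m}(K')$ are disjoint and cover $T$, so $\{V^{m}(K),V^{m}(K')\}=\{A,B\}$ \emph{provided} one can show that $A$ and $B$ are the only complementary disjoint closed pieces available. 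This rigidity comes from the lemma \ref{l3.2} computation $V^{m}(N)\cap V^{m}(L)=V^{m}(NL)$ together with $V^{m}(N)\cup V^{m}(L)=V^{m}(\Im(T))$-type relations, which force the decomposition of $T$ into two disjoint closed sets to be essentially unique once one clopen part is fixed.

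For the no-edge-inside-a-part claim, if $K,K'\in U$ were adjacent then $V^{m}(K)=V^{m}(K')=A$, so $V^{m}(K)\cup V^{m}(K')=A\neq T$ (as $B\neq\emptyset$), contradicting adjacency; the symmetric statement rules out edges inside $W$. Hence every edge joins $U$ to $W$, giving the bipartition. The main obstacle I anticipate is precisely the uniqueness-of-complement step: a priori $T$ might admit several distinct ways of splitting into two disjoint nonempty closed subsets, and then vertices could realize $V^{m}$-values other than $A$ or $B$, so $U\cup W$ might miss some vertices. I would resolve this by arguing connectivity, namely that within a single connected component of $G_{d}(\tau^{m}_T)$ all edges refer to the \emph{same} clopen splitting of $T$; one propagates the partition $\{A,B\}$ along paths, using at each step that two disjoint closed sets covering $T$ and sharing a common complementary piece must coincide with the previously fixed pair. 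Since a graph is bipartite if and only if each connected component is, and each component is bipartite by the component-local partition just described, the result follows. I would close by noting that the overall structure parallels Theorem~C and the earlier topology-graph arguments, so only the disjointness refinement genuinely needs checking.
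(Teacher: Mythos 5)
Your proposal is correct, and it is worth noting that the paper itself gives no argument here at all: its ``proof'' is the single line ``Use the technique of \cite[Theorem 2.17]{ah}'', so your write-up supplies details the paper omits. Your reading of Definition \ref{d3.10} (the second ``$V^{m}(N)\cup V^{m}(L)=\emptyset$'' must be an intersection) is the only sensible one, and it is consistent with Lemma \ref{l3.11}(a), where $G_{d}$ being non-empty corresponds to $T$ being disconnected. One caution: the ``rigidity'' you try to extract from Lemma \ref{l3.2} in the middle of the proposal --- that $T$ admits essentially only one decomposition into two disjoint non-empty closed subsets --- is simply false in general; Example \ref{e3.15} ($M=\Bbb Z/30\Bbb Z$, $T=Max(M)$ with three elements) gives three distinct such splittings, and correspondingly $G_{d}(\tau^{m}_T)$ is disconnected. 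But you correctly identified this as the weak point and your fix is sound: within a connected component, adjacency forces $V^{m}(L)=T\setminus V^{m}(N)$ (set-theoretic complementation is unique, no topology needed), so propagation along paths shows every vertex of the component has $V^{m}$-value $A$ or $B=T\setminus A$, and no edge can join two vertices with the same value since then the union would be a proper subset of $T$. In fact, the argument streamlines further and avoids components altogether: along any cycle the $V^{m}$-values must alternate between a set and its complement in $T$, so every cycle has even length and the graph has no odd cycle, which is bipartiteness. Either way, your proof is complete; the appeal to Lemma \ref{l3.2} rigidity should just be deleted rather than repaired.
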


\begin{proof}
Use the technique of \cite[Theorem 2.17]{ah}.
\end{proof}

\begin{cor}\label{c3.13} By Theorem \ref{t3.12}, if $G_{d}(\tau^{m}_T)$ contains a cycle,
 then $gr(G_{d}(\tau^{m}_T))=4$.
\end{cor}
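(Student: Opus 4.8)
The plan is to establish the two inequalities $gr(G_{d}(\tau^{m}_T)) \geq 4$ and $gr(G_{d}(\tau^{m}_T)) \leq 4$ separately. For the lower bound I would invoke Theorem \ref{t3.12}: since $G_{d}(\tau^{m}_T)$ is bipartite it contains no cycle of odd length, and as a simple graph it has no cycle of length less than three; hence every cycle it contains has length at least $4$, which already gives $gr(G_{d}(\tau^{m}_T)) \geq 4$ once a cycle is assumed to exist.

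The real content is the upper bound $gr(G_{d}(\tau^{m}_T)) \leq 4$, and here I want to stress that bipartiteness by itself is not sufficient (bipartite graphs of girth $6, 8, \ldots$ exist), so one must exploit the defining adjacency of $G_{d}(\tau^{m}_T)$, namely that two vertices $N, L$ are adjacent exactly when the closed sets $V^{m}(N)$ and $V^{m}(L)$ are disjoint and satisfy $V^{m}(N) \cup V^{m}(L) = T$. The key observation is that if $N - L$ is an edge, then $V^{m}(L) = T \setminus V^{m}(N)$ is completely determined, as a set, by $V^{m}(N)$. Consequently, along any path $N_0 - N_1 - N_2 - \cdots$ the closed sets alternate between the two complementary sets $A := V^{m}(N_0)$ and $T \setminus A$; that is, $V^{m}(N_i) = A$ for even $i$ and $V^{m}(N_i) = T \setminus A$ for odd $i$.

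Now suppose $G_{d}(\tau^{m}_T)$ contains a cycle. If its length is $4$ we are done, so I would assume a cycle $N_0 - N_1 - \cdots - N_{2n-1} - N_0$ of length $2n \geq 6$ (the length being even by the lower-bound argument). The alternation yields $V^{m}(N_0) = V^{m}(N_2) = A$ and $V^{m}(N_1) = V^{m}(N_3) = T \setminus A$, while $N_0, N_1, N_2, N_3$ are pairwise distinct because a cycle visits distinct vertices. The three edges $N_0 - N_1$, $N_1 - N_2$, $N_2 - N_3$ already lie on the given cycle, and since $V^{m}(N_3) = T \setminus A$ and $V^{m}(N_0) = A$ are disjoint with union $T$ while $N_3 \neq N_0$, the pair $N_3, N_0$ is adjacent as well. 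Hence $N_0 - N_1 - N_2 - N_3 - N_0$ is a genuine $4$-cycle, so $gr(G_{d}(\tau^{m}_T)) \leq 4$. Combining the two bounds gives $gr(G_{d}(\tau^{m}_T)) = 4$.

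I expect the only delicate point to be the verification that the four vertices $N_0, N_1, N_2, N_3$ extracted from the long cycle are genuinely distinct (so that the resulting $4$-cycle is a legitimate cycle in a simple graph) and that the chord $N_3 - N_0$ is a bona fide edge; both follow from the complementary-set structure together with the fact that consecutive vertices of a cycle of length at least $6$ are distinct. Beyond keeping this bookkeeping straight, I do not anticipate a serious obstacle.
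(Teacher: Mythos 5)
Your proof is correct, and it is genuinely more complete than what the paper records. The paper's entire justification for Corollary \ref{c3.13} is the citation ``By Theorem \ref{t3.12}'': bipartiteness excludes odd cycles, so any cycle has length at least $4$. That, however, is only the lower bound; as you correctly emphasize, bipartiteness by itself can never yield $gr(G_{d}(\tau^{m}_T))\leq 4$ (bipartite graphs of girth $6$, $8$, $\dots$ exist), so the paper's one-line argument silently omits the half of the statement that carries the actual content. Your alternation argument fills exactly this gap: since adjacency in $G_{d}(\tau^{m}_T)$ forces $V^{m}(L)=T\setminus V^{m}(N)$, the closed sets along any walk alternate between a fixed set $A$ and its complement $T\setminus A$, so four consecutive vertices $N_0,N_1,N_2,N_3$ of any cycle satisfy $V^{m}(N_0)=V^{m}(N_2)=A$ and $V^{m}(N_1)=V^{m}(N_3)=T\setminus A$, whence the chord $N_3-N_0$ is an edge and a $4$-cycle results. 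In effect you have shown that every connected component of $G_{d}(\tau^{m}_T)$ is a complete bipartite graph --- the same structural fact that underlies Proposition \ref{p3.17} --- and a complete bipartite graph containing a cycle has girth $4$. One remark worth making explicit: Definition \ref{d3.10} literally writes ``$V^{m}(N)\cup V^{m}(L)=\emptyset$'', an evident misprint for the intersection; your reading (disjointness of $V^{m}(N)$ and $V^{m}(L)$) is the only one under which the graph is not vacuous, and it is precisely what your argument uses, so you should note that you are correcting the typo rather than quietly reinterpreting the definition.
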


\begin{eg}\label{e3.14} Set $M:= \Bbb Z/12\Bbb Z$. Then $Max(M)=\{2\Bbb Z/12\Bbb Z, 3\Bbb Z/12\Bbb Z\}$.
Set $T= Max(M)$. Clearly, $G(\tau^{m}_{T})= G_{d}(\tau^{m}_T)$ is
a bipartite graph and $\Bbb Z/(\cup_{P\in T}P:M)\cong \Bbb Z/6\Bbb
Z$
 contains idempotents other than $\overline{0}$ and $\overline{1}$.
\end{eg}

\begin{eg}\label{e3.15} Set $M:= \Bbb Z/30\Bbb Z$. Then $Max(M)=\{2\Bbb Z/30\Bbb Z, 3\Bbb Z/30\Bbb Z, 5\Bbb Z/30\Bbb Z\}$.
Set $T= Max(M)$. Clearly, $G_d(\tau_T)$ is a bipartite graph and
$\Bbb Z/(\bigcap_{P\in T}P:M)\cong \Bbb Z/30\Bbb Z$ contains
idempotents other than $\overline{0}$ and $\overline{1}$.
\end{eg}

The Example \ref{e3.15} shows that $G_{d}(\tau^{m}_T)$ is not
necessarily connected. The following proposition provides some
useful characterization for this case.

\begin{prop}\label{p3.17}
\begin {itemize}
\item [(a)] $G_{d}(\tau^{m}_T)$ with two parts $U$ and $V$ is a
complete bipartite graph if and only if for every $N, L\in U$
(resp. in $V$), $V^{m}(N)=V^{m}(L)$. \item [(b)]
$G_{d}(\tau^{m}_T)$ is connected if and only if it is a complete
bipartite graph.
\end {itemize}
\end{prop}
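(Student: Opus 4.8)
The plan is to reduce the whole statement to one observation about the adjacency relation in $G_{d}(\tau^{m}_{T})$. First I would record that, by Definition \ref{d3.10}, two distinct vertices $N$ and $L$ are adjacent precisely when $V^{m}(N)\cup V^{m}(L)=T$ and $V^{m}(N)\cap V^{m}(L)=\emptyset$; since the union forces $V^{m}(N),V^{m}(L)\subseteq T$, this says exactly that $V^{m}(L)=T\setminus V^{m}(N)$. In particular, for every vertex $N$ the set $V^{m}(N)$ is a proper non-empty clopen subset of $T$ (it is closed in $T$, and its complement $V^{m}(L)$ is closed as well), the edge relation is governed entirely by the assignment $N\mapsto V^{m}(N)$, and two vertices with the same value of $V^{m}$ are never adjacent. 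Note also that every vertex $N$ has a neighbour, namely the witnessing submodule $L$, so $G_{d}(\tau^{m}_{T})$ has no isolated vertices.

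For (a), I would argue as follows. Assume $G_{d}(\tau^{m}_{T})$ is complete bipartite with parts $U$ and $V$. Fix $L_{0}\in V$; since every $N\in U$ is adjacent to $L_{0}$, the observation gives $V^{m}(N)=T\setminus V^{m}(L_{0})$, which is independent of $N$, so $V^{m}$ is constant on $U$; symmetrically it is constant on $V$. Conversely, suppose $V^{m}$ is constant on each of $U$ and $V$, with common values $C_{U}$ and $C_{V}$. Because $G_{d}(\tau^{m}_{T})$ has no isolated vertex, there is an edge $N_{0}-L_{0}$ with $N_{0}\in U$, $L_{0}\in V$, forcing $C_{V}=T\setminus C_{U}$. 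Then for arbitrary $N\in U$ and $L\in V$ we have $V^{m}(N)\cup V^{m}(L)=C_{U}\cup(T\setminus C_{U})=T$ and $V^{m}(N)\cap V^{m}(L)=\emptyset$, while $N\neq L$ since $C_{U}\neq C_{V}$; hence $N$ and $L$ are adjacent and the graph is complete bipartite.

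For (b), I would use the same dictionary together with the fact (Theorem \ref{t3.12}) that the graph is already bipartite. If $G_{d}(\tau^{m}_{T})$ is complete bipartite with parts $U,V$, then since it has no isolated vertex both parts are non-empty, so the graph is connected. For the converse, suppose it is connected. Complementation $C\mapsto T\setminus C$ is a fixed-point-free involution on the family $\{V^{m}(N):N\text{ a vertex}\}$, and by the observation the only edges run between a class $\{N:V^{m}(N)=C\}$ and its complementary class $\{L:V^{m}(L)=T\setminus C\}$; thus distinct complementary pairs lie in distinct connected components. Connectedness therefore forces a single complementary pair $\{C,T\setminus C\}$, so $V^{m}$ takes exactly the two values $C$ and $T\setminus C$, each constant on one side, and by part (a) the graph is complete bipartite. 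The main obstacle I anticipate is purely bookkeeping: confirming that the bipartition produced in Theorem \ref{t3.12} is compatible with the clopen-set classes, so that the phrase ``with two parts $U$ and $V$'' in (a) may be taken with respect to that bipartition, and handling the degenerate case where $T$ admits several disconnections, which splits the graph into several complete bipartite pieces.
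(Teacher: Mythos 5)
Your proposal is correct and takes essentially the same route as the paper: the paper's one-line proof rests on the fact that for two vertices $d(N,L)=2$ if and only if $V^{m}(N)=V^{m}(L)$, which is an immediate consequence of your key observation that adjacency in $G_{d}(\tau^{m}_{T})$ means exactly $V^{m}(L)=T\setminus V^{m}(N)$ (reading the second $\cup$ in Definition \ref{d3.10} as $\cap$, as you rightly do). Your complementary-class bookkeeping for (a) and (b) is just a fleshed-out version of that same dictionary between edges and the map $N\mapsto V^{m}(N)$, so nothing needs to change.
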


\begin{proof}
Use the fact that if $N, L$ are two vertices, then $d(N,L)=2$ if
and only if $V^{m}(N)=V^{m}(L)$.
\end{proof}

\section{The relationship between $G(\tau^{m}_{T})$ and $AG(M)$}

The purpose of this section is to illustrate the connection
between the Zariski topology-graph on the maximal spectrum of a
module and the annihilating-submodule graph.

\begin{thm}\label{t4.1} Let $M$ be a Max-surjective module and suppose $N$
and $L$ are adjacent in $G(\tau^{m}_{T})$. Then
 $J^{m}((N:M)M)/\Im(T)$ and $J^{m}((L:M)M)/\Im(T)$ are adjacent in $AG(M/ \Im(T))$.
\end{thm}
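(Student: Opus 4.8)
Write $\overline{M}=M/\Im(T)$, and for a submodule $K$ with $K\supseteq \Im(T)$ write $\overline{K}=K/\Im(T)$. By Definition \ref{d3.1}, the adjacency of $N$ and $L$ means $V^{m}(N)\cup V^{m}(L)=T$ while $V^{m}(N)\neq T$, $V^{m}(L)\neq T$, and $N\neq L$. My first move is a harmless reduction to the $J$-radicals of $N$ and $L$ themselves: since $(N:M)\subseteq (Q:M)$ forces $(N:M)M\subseteq (Q:M)M\subseteq Q$, one checks directly that $V^{m}((N:M)M)=V^{m}(N)$, and likewise for $L$. Because $V^{m}$ determines $J^{m}$, this gives $J^{m}((N:M)M)=J^{m}(N)$ and $J^{m}((L:M)M)=J^{m}(L)$. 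Hence it suffices to prove that $A:=J^{m}(N)/\Im(T)$ and $B:=J^{m}(L)/\Im(T)$ are adjacent vertices of $AG(\overline{M})$.

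\textbf{The product vanishes.}
By definition $\Im(T)=\bigcap_{Q\in T}Q$, and since $T=V^{m}(N)\cup V^{m}(L)$ I split the intersection to get $\Im(T)=\bigl(\bigcap_{Q\in V^{m}(N)}Q\bigr)\cap\bigl(\bigcap_{Q\in V^{m}(L)}Q\bigr)=J^{m}(N)\cap J^{m}(L)$. Passing to $\overline{M}$ yields $A\cap B=(J^{m}(N)\cap J^{m}(L))/\Im(T)=0$. The product in the sense of $AG(\overline{M})$ is always contained in the intersection: $(A:\overline{M})(B:\overline{M})\overline{M}\subseteq (A:\overline{M})\overline{M}\cap (B:\overline{M})\overline{M}\subseteq A\cap B=0$, using that $(B:\overline{M})$ and $(A:\overline{M})$ are ideals. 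Thus $AB=0$.

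\textbf{Both are genuine, distinct vertices.}
It remains to see that $A$ and $B$ are nonzero proper submodules of $\overline{M}$ with $A\neq B$; then $AB=0$ makes each a vertex of $AG(\overline{M})$ (with the other as witness) and makes them adjacent. The workhorse is the identity $V^{m}(K)=V^{m}(J^{m}(K))$ for every submodule $K$: the colon-intersection formula $(J^{m}(K):M)=\bigcap_{Q\in V^{m}(K)}(Q:M)\supseteq (K:M)$ gives one inclusion, while $J^{m}(K)\subseteq Q$ for each $Q\in V^{m}(K)$ gives the reverse. For properness, note that if $V^{m}(N)=\emptyset$ then $T=V^{m}(L)\neq T$, a contradiction, so $V^{m}(N)\neq\emptyset$, whence $J^{m}(N)$ sits inside a maximal submodule and $A\neq\overline{M}$; symmetrically $B\neq\overline{M}$. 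For non-vanishing, suppose $A=0$, i.e.\ $J^{m}(N)=\Im(T)=J^{m}(N)\cap J^{m}(L)$, so $J^{m}(N)\subseteq J^{m}(L)$; applying $V^{m}(-)$ and the identity above yields $V^{m}(L)=V^{m}(J^{m}(L))\subseteq V^{m}(J^{m}(N))=V^{m}(N)$, hence $T=V^{m}(N)$, contradicting $V^{m}(N)\neq T$. So $A\neq 0$, and by symmetry $B\neq 0$. The same computation gives distinctness: $A=B$ forces $J^{m}(N)=J^{m}(L)$, hence $V^{m}(N)=V^{m}(L)$ and $T=V^{m}(N)$, again impossible.

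\textbf{Main obstacle.}
The only non-formal point is the non-vanishing step, where I must convert the set-theoretic fact $V^{m}(N)\neq T$ into the submodule inequality $J^{m}(N)\neq\Im(T)$. This rests on the Galois-type correspondence $V^{m}(K)=V^{m}(J^{m}(K))$ together with the order-reversing passage $J^{m}(N)\subseteq J^{m}(L)\Rightarrow V^{m}(L)\subseteq V^{m}(N)$, and the delicate point is that I want this implication from the \emph{submodule} inclusion and not merely from $(J^{m}(N):M)\subseteq (J^{m}(L):M)$; I would secure it through the colon-intersection formula above. I expect the Max-surjectivity hypothesis to be what guarantees these radical--closure identities interact cleanly with the colon operation, and, via Remark \ref{r3.4} and \cite{ak}, what prevents $J^{m}$ from collapsing distinct closed subsets of $Max(M)$; notably, the product-vanishing step itself is formal and does not appear to use it.
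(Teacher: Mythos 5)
Your proof is correct, but it takes a genuinely different and more economical route than the paper's. The paper proves the product-vanishing step radical-theoretically: from $V^{*m}(NL)=T$ it gets $J^{m}(NL)=\Im(T)$, then uses Max-surjectivity (via \cite[Lemma 3.11(a)]{ak}) to commute the colon past the $J$-radical, $(J^{m}((N:M)M):M)=J^{m}((N:M))$, and finally invokes \cite[Proposition 2]{lu89} to obtain $J^{m}((N:M))J^{m}((L:M))M\subseteq J^{m}((N:M)(L:M)M)=J^{m}(NL)=\Im(T)$. You bypass all of this machinery with the lattice identity $\Im(T)=J^{m}(N)\cap J^{m}(L)$, obtained by splitting the intersection over $T=V^{m}(N)\cup V^{m}(L)$, together with the purely formal containment of the product $(A:\overline{M})(B:\overline{M})\overline{M}$ in the intersection $A\cap B$; your preliminary reduction $J^{m}((N:M)M)=J^{m}(N)$, via $V^{m}((N:M)M)=V^{m}(N)$, is also sound, and incidentally shows that Theorem \ref{t4.1} and Corollary \ref{c4.2} assert literally the same thing. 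The non-degeneracy steps also differ: the paper deduces from $(N:M)M\subseteq J^{m}((N:M)M)$ that any collapse forces $(N:M)M\subseteq\Im(T)$ and hence $V^{m}(N)=T$, while you route the same contradiction through the closure identity $V^{m}(J^{m}(K))=V^{m}(K)$ and the order-reversal of $V^{m}$; both arguments are valid. The notable payoff of your approach, which resolves the uncertainty in your own closing paragraph, is that no step of your argument uses Max-surjectivity at all: the hypothesis is superfluous, and your proof establishes the conclusion for an arbitrary module $M$. What the paper's route buys is coherence with the toolkit from \cite{ak} and \cite{lu89} that it reuses elsewhere; as a proof of this particular statement, yours is shorter, self-contained, and strictly more general.
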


\begin{proof}
Assume that $N$ and $L$ are adjacent in $G(\tau^{m}_{T})$ so that
$V^{m}(N) \cup V^{m}(L)= T$. Then we have
$V^{*m}(((N:M)M)((L:M)M)))= T$.\\
 It implies that
$J^{m}(((N:M)M)((L:M)M))= \Im(T)$. Hence we have $\Im(T) \subseteq
J^{m}((N:M)M)$ and $\Im(T) \subseteq J^{m}((L:M)M)$. Now it is
enough to show that $$ J^{m}((N:M)M) J^{m}((L:M)M)=
(J^{m}((N:M)M): M)(J^{m}((L:M)M): M)M \subseteq$$
 $\Im(T)
.$\\ Since $M$ is Max-surjective, by \cite[Lemma 3.11(a)]{ak}, we
have $(J^{m}((N:M)M): M)=J^{m}(((N:M)M: M))= J^{m}((N:M))$ and
$(J^{m}((L:M)M): M)= J^{m}((L:M))$. By using \cite[Proposition
2]{lu89}, it turns out that
$$
(J^{m}((N:M)M): M)(J^{m}((L:M)M): M)M= J^{m}((N:M)) J^{m}((L:M)))M
\subseteq $$
$$
 J((N:M)(L:M))M \subseteq J^{m}((N:M)(L:M)M)= J^{m}(NL)=\Im(T)
.$$ Note that if $J^{m}((N:M)M)=\Im(T)$ or
$J^{m}((N:M)M)=J^{m}((L:M)M)$, then we have $(N:M)M \subseteq
\Im(T)$. This implies that $V^{m}(N)= T$, a contradiction. This
completes the proof.
\end{proof}

\begin{cor}\label{c4.2} Assume that the hypothesis  hold as in Theorem \ref{t4.1}.
Then $J^{m}(N)/\Im(T)$ and $J^{m}(L)/\Im(T)$ are
  adjacent in $AG(M/ \Im(T))$.
\end{cor}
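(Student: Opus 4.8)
The plan is to deduce this directly from Theorem \ref{t4.1} by identifying the $J$-radicals appearing there with those in the statement. The only points that need checking are the two identities $J^{m}((N:M)M)=J^{m}(N)$ and $J^{m}((L:M)M)=J^{m}(L)$; granting these, the quotients $J^{m}((N:M)M)/\Im(T)$ and $J^{m}((L:M)M)/\Im(T)$ coincide with $J^{m}(N)/\Im(T)$ and $J^{m}(L)/\Im(T)$, respectively, and the desired adjacency in $AG(M/\Im(T))$ is then precisely the conclusion of Theorem \ref{t4.1}.

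First I would establish the colon identity $(((N:M)M):M)=(N:M)$. The inclusion $\subseteq$ is immediate from $(N:M)M\subseteq N$, which gives $(((N:M)M):M)\subseteq(N:M)$. For the reverse inclusion, if $r\in(N:M)$ then $rM\subseteq(N:M)M$ by the very definition of the product ideal–times–module, so $r\in(((N:M)M):M)$; hence equality holds.

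Next, since $V^{m}$ of a submodule depends only on its associated colon ideal---recall $V^{m}(K)=\{Q\in Max(M):(Q:M)\supseteq(K:M)\}$---the identity $(((N:M)M):M)=(N:M)$ yields $V^{m}((N:M)M)=V^{m}(N)$. Because $J^{m}$ is by definition the intersection of all members of $V^{m}$, equal closed sets force equal $J$-radicals, and therefore $J^{m}((N:M)M)=J^{m}(N)$. Applying the same argument to $L$ gives $J^{m}((L:M)M)=J^{m}(L)$. Substituting these two equalities into the statement of Theorem \ref{t4.1} finishes the proof.

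I do not expect a genuine obstacle here: the corollary is essentially a formal consequence of Theorem \ref{t4.1}, and the single substantive ingredient is the elementary colon computation above. The only additional thing to note is that $\Im(T)$ is contained in each of $J^{m}(N)$ and $J^{m}(L)$ (which follows from the inclusions $\Im(T)\subseteq J^{m}((N:M)M)$ and $\Im(T)\subseteq J^{m}((L:M)M)$ already obtained in the proof of Theorem \ref{t4.1}, combined with the identifications just made), so that the displayed quotient submodules are well defined.
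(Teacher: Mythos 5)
Your proof is correct, and it takes a genuinely different route from the paper's. You reduce Corollary \ref{c4.2} to a purely formal identification: the colon identity $(((N:M)M):M)=(N:M)$ gives $V^{m}((N:M)M)=V^{m}(N)$, hence $J^{m}((N:M)M)=J^{m}(N)$ (likewise for $L$), so the corollary is literally Theorem \ref{t4.1} restated with the same submodules. The paper instead reruns the argument of Theorem \ref{t4.1} with $N$ and $L$ in place of $(N:M)M$ and $(L:M)M$: from adjacency it gets $V^{*m}(NL)=T$ and $J^{m}(NL)=\Im(T)$, and then establishes the containment $J^{m}(N)J^{m}(L)=J^{m}((N:M))J^{m}((L:M))M\subseteq J^{m}(NL)=\Im(T)$, which requires invoking the Max-surjectivity hypothesis (via \cite[Lemma 3.11(a)]{ak}) and \cite[Proposition 2]{lu89} a second time. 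Your identification avoids any further appeal to those ingredients, and it automatically transfers the distinctness and nontriviality checks made at the end of the proof of Theorem \ref{t4.1} (that $J^{m}((N:M)M)\neq\Im(T)$ and $J^{m}((N:M)M)\neq J^{m}((L:M)M)$), since the vertices in question are the very same submodules; in the paper's rerun these checks are left implicit. What the paper's computation buys is an explicit display of the containment $J^{m}(N)J^{m}(L)\subseteq\Im(T)$; as a deduction of the corollary, however, your route is shorter, more elementary, and makes clear that the corollary adds no new content beyond Theorem \ref{t4.1}.
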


\begin{proof}
By meeting the above theorem again, we see that $V^{*m}(NL)=T$,
$J^{m}(NL)=\Im(T)$, and
$$
J^{m}(N) J^{m}(L)=J^{m}(N:M)J^{m}(L:M)M\subseteq\
J^{m}(NL)=\Im(T).
$$
\end{proof}

\begin{prop}\label{p4.3} Suppose $M/ \Im(T)$ is not a vertex in $AG(M/ \Im(T))$.
 Then we have the following.
\begin {itemize}
\item [(a)]  The annihilating-submodule graph $AG(M/ \Im(T))$ is
isomorphic to a subgraph of $G(\tau^{m}_{T})$. \item [(b)] If $M$
is a Max-surjective module or $Spec(M)=Max(M)$, then\\ $AG(M/
\Im(T))=\emptyset$ if and only if $G(\tau^{m}_{T})=\emptyset$.
\item [(c)] If $R$ is an Artinian ring and $M/ \Im(T)$ is a
finitely
 generated module, then for every non-zero proper submodule $N/ \Im(T)$ of
 $M/ \Im(T)$, $N/ \Im(T)$ and $N$ are
 vertices in $AG(M/ \Im(T))$ and $G(\tau^{m}_{T})$, respectively.

\end {itemize}

\end{prop}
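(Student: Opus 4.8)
The plan is to base everything on the order-isomorphism between the submodules of $\overline{M}:=M/\Im(T)$ and the submodules $N$ of $M$ with $\Im(T)\subseteq N$, written $\overline{N}=N/\Im(T)\mapsto N$. Under it one has $(\overline{N}:\overline{M})=(N:M)$ as ideals of $R$, so the product in $AG(\overline{M})$ becomes $\overline{N}\,\overline{L}=0$ if and only if $(N:M)(L:M)M\subseteq\Im(T)$. I assume $T$ is closed, i.e. $T=V^{m}(\Im(T))$; this is the relevant setting, since by Lemma \ref{l3.2} the graph $G(\tau^{m}_{T})$ is empty otherwise. Then $\Im(T)\subseteq N$ gives $V^{m}(N)\subseteq V^{m}(\Im(T))=T$, and since colon distributes over intersections we have $\bigcap_{Q\in T}(Q:M)=(\Im(T):M)$, a fact I use repeatedly.

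For (a) I would set $\phi(\overline{N})=N$ on the vertices of $AG(\overline{M})$ and show that $\phi$ is a graph isomorphism onto its image. Using $V^{m}(N)\cup V^{m}(L)=V^{m}(NL)$ together with the inclusion-reversing behaviour of $V^{m}$, the relation $\overline{N}\,\overline{L}=0$ gives $T=V^{m}(\Im(T))\subseteq V^{m}(NL)=V^{m}(N)\cup V^{m}(L)\subseteq T$, so $V^{m}(N)\cup V^{m}(L)=T$; conversely $V^{m}(N)\cup V^{m}(L)=T$ yields $V^{m}(NL)=T$, whence $(N:M)(L:M)\subseteq\bigcap_{Q\in T}(Q:M)=(\Im(T):M)$ and therefore $\overline{N}\,\overline{L}=0$. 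Thus $\phi$ both preserves and reflects adjacency, and it is injective by the submodule correspondence.

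The delicate point, which I expect to be the main obstacle, is that $\phi$ must take values in $V(G(\tau^{m}_{T}))$, i.e. each image $N$ must satisfy $V^{m}(N)\neq T$. Here the standing hypothesis that $\overline{M}$ is not a vertex is exactly what is needed. Since $T=V^{m}(\Im(T))$, the equality $V^{m}(N)=T$ forces every $Q\in T$ to satisfy $(Q:M)\supseteq(N:M)$, hence $(N:M)\subseteq\bigcap_{Q\in T}(Q:M)=(\Im(T):M)$; combined with $(N:M)\supseteq(\Im(T):M)$ this gives $(N:M)=(\Im(T):M)=Ann(\overline{M})$. By Remark \ref{r3.1}, a non-zero proper submodule $\overline{N}$ with $(\overline{N}:\overline{M})=Ann(\overline{M})$ would make $\overline{M}$ a vertex, a contradiction. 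As $\overline{M}$ is not a vertex, every vertex of $AG(\overline{M})$ is non-zero and proper, so this argument applies and $V^{m}(N)\neq T$ for each such $N$; this completes (a).

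For (b), Theorem B(a) reduces $AG(\overline{M})=\emptyset$ to "$\overline{M}$ is a prime module", i.e. "$\Im(T)$ is a prime submodule of $M$". The implication $AG(\overline{M})\neq\emptyset\Rightarrow G(\tau^{m}_{T})\neq\emptyset$ is immediate from (a). For the reverse, suppose $\Im(T)$ is prime. If $Spec(M)=Max(M)$, then by \cite[Proposition 5.4]{lu99} the closed set $T=V^{m}(\Im(T))$ is irreducible, so $G(\tau^{m}_{T})=\emptyset$ by Lemma \ref{l3.2}. If $M$ is Max-surjective, then every maximal ideal containing $(\Im(T):M)$ has the form $(Q:M)$ with $Q\in T$, so $(\Im(T):M)=\bigcap_{Q\in T}(Q:M)$ is $J$-radical, and being prime it is a $J$-radical prime ideal; hence $G(\tau^{m}_{T})=\emptyset$ by Remark \ref{r3.4}. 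Finally, (c) follows at once from Proposition D and (a): Proposition D makes every non-zero proper $N/\Im(T)$ a vertex of $AG(\overline{M})$, and the map $\phi$ from (a) then exhibits the corresponding $N$ as a vertex of $G(\tau^{m}_{T})$.
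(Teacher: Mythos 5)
Your proof is correct and follows essentially the same route as the paper's: part (a) via the correspondence $N/\Im(T)\mapsto N$, with the non-vertex hypothesis ruling out $V^{m}(N)=T$ through the identity $(N:M)=\bigcap_{Q\in T}(Q:M)=(\Im(T):M)$; part (b) via Theorem B(a) together with Remark \ref{r3.4}; and part (c) via Proposition D combined with (a). Your explicit standing assumption $T=V^{m}(\Im(T))$ is exactly the assumption the paper's proof makes implicitly (e.g.\ when it deduces $V^{m}(NL)=T$ from $NL\subseteq\Im(T)$), so your write-up is, if anything, slightly more careful on that point.
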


\begin{proof} Let us begin our proof by noting that $M/\Im(T)$ is a vertex in $AG(M/ \Im(T))$
 if and only if there exists $N < M$
containing $\Im(T)$ properly with $V^{m}(N)=T$.

(a) Let $N/ \Im(T) \in V(AG(M/ \Im(T)))$. Then there exists a
nonzero
 submodule $L/ \Im(T)$ of $M/ \Im(T)$
such that it is adjacent to $N/ \Im(T)$ ($N\neq L$, because $M/ \Im(T)$ is not a vertex).
 So we have $NL\subseteq \Im(T)$.
 Hence $V^{m}(NL)=T$. If $V^{m}(N)=T$, then $(N:M)=(\Im(T) :M)$. It follows that $M/ \Im(T)$
  is a vertex, a contradiction. Hence $N$ is a
  vertex in $G(\tau^{m}_{T})$ which is adjacent to $L$. In particular, if $M=R$ and $\Im(T)=0$,
   then $AG(R)$ is a subgraph of $G(\tau_{Max(M)})$.

(b) To see the forward implication, let $AG(M/ \Im(T))=\emptyset$.
Then $M/ \Im(T)$ is a prime $R$-module so that $\Im(T)$ is a prime
submodule of $M$. Thus we have $G(\tau^{m}_{T})=\emptyset$.
Conversely, suppose that $AG(M/ \Im(T))\neq\emptyset$. Then by
part (a), $AG(M/ \Im(T))$ is isomorphic to a subgraph of
$G(\tau^{m}_{T})$. Hence $G(\tau^{m}_{T})\neq \emptyset$, as
desired.

(c) This follows from Proposition D and part (a).
\end{proof}

We need the following theorem from \cite[Theorem 3.3]{mm92}.

\begin{thm}\label{t4.4}
Let $M$ be a finitely generated module and let $N$ be a submodule of $M$ such that $(N:M) \subseteq P$,
where $P$ is a prime ideal of $R$. Then there exists a prime submodule $K$ of $M$ such that
$N \subseteq K$ and $(K:N)= P$.
\end{thm}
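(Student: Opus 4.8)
The plan is to construct $K$ by localizing at $P$, where the problem reduces to an easy linear-algebra statement over the residue field, and then contracting back to $M$. (I read the conclusion as $(K:M)=P$, i.e. $K$ is a $P$-prime submodule; this is the meaningful assertion, since $N\subseteq K$ already forces $rN\subseteq K$ for every $r\in R$ and hence $(K:N)=R$.) First I would reduce to the case $N=0$. Passing to $\overline{M}=M/N$, the lattice-isomorphism theorem gives an inclusion-preserving bijection between submodules of $M$ containing $N$ and submodules of $\overline{M}$, under which prime submodules correspond to prime submodules and colon ideals are preserved, $(\overline{K}:\overline{M})=(K:M)$ whenever $N\subseteq K$. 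Since $(N:M)=Ann(M/N)=Ann(\overline{M})$, the hypothesis reads $Ann(\overline{M})\subseteq P$, so it suffices to find a $P$-prime submodule of the finitely generated module $\overline{M}$. Renaming, I henceforth assume $N=0$ and $Ann(M)\subseteq P$, and seek a prime submodule $K$ with $(K:M)=P$.

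Next I would localize at $P$. Here the finite generation of $M$ enters in an essential way, guaranteeing $M_P\neq 0$: if $M_P=0$ then each of finitely many generators $m_1,\dots,m_n$ of $M$ is killed by some $s_i\notin P$, whence $s_1\cdots s_n\in Ann(M)\subseteq P$, a contradiction. Thus $M_P$ is a nonzero finitely generated module over the local ring $R_P$ with maximal ideal $PR_P$, so by Nakayama's lemma the quotient $M_P/PR_P M_P$ is a nonzero vector space over the residue field $R_P/PR_P$.

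Then I would produce a prime submodule upstairs and contract it. Choosing any maximal proper subspace $W$ of $M_P/PR_P M_P$ and letting $K'$ be its preimage in $M_P$, the quotient $M_P/K'$ is one-dimensional over $R_P/PR_P$, so $K'$ is a prime submodule of $M_P$ with $(K':M_P)=PR_P$. Setting $K=\{m\in M:\ m/1\in K'\}$, the contraction along the localization map $\phi\colon M\to M_P$, I would check: (i) $(K:M)=P$, since $rM\subseteq K$ iff $(r/1)M_P\subseteq K'$ iff $r/1\in(K':M_P)=PR_P$ iff $r\in P$; (ii) $K$ is prime, for if $rm\in K$ with $r\notin P$ then $r/1$ is a unit in $R_P$ and $(r/1)\phi(m)\in K'$ forces $\phi(m)\in K'$, i.e. $m\in K$, while $K\neq M$ because $\phi(M)$ generates $M_P\neq K'$; (iii) $N=0\subseteq K$, so undoing the reduction gives $N\subseteq K$ as required.

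The step demanding the most care is the contraction in the last paragraph. One must use that $K'$ is an $R_P$-submodule, not merely an additive subgroup, so that the equivalence $(r/1)M_P\subseteq K'\Leftrightarrow rM\subseteq K$ holds; this in turn rests on $M_P$ being generated as an $R_P$-module by the image $\phi(M)$. One must also confirm that $(K':M_P)=PR_P$ contracts back to exactly $P$ and not to a larger ideal. Beyond this bookkeeping the argument is routine: the single substantive hypothesis is finite generation, invoked once to ensure $M_P\neq 0$, after which the existence of a $P$-prime submodule is forced by Nakayama and the standard fact that prime submodules pull back to prime submodules along localization.
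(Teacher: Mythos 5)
The paper does not prove this statement at all: it is imported verbatim from \cite[Theorem 3.3]{mm92} (the sentence preceding it says exactly this), so there is no internal proof to compare yours against, and I can only assess your argument on its own terms. It is correct and complete. Your decision to read the conclusion as $(K:M)=P$ rather than the printed $(K:N)=P$ is the right one: as you observe, $N\subseteq K$ forces $(K:N)=R$, and the paper itself uses the conclusion in the form $(K:M)=(Q:M)$ when it invokes this theorem inside the proof of Theorem \ref{t4.5}, so the printed colon is a typo. All three steps of your argument check out: the passage to $M/N$ preserves primeness and colon ideals; finite generation enters exactly where you say it does, first to get $M_P\neq 0$ (the product $s_1\cdots s_n$ argument, which also needs $P$ prime, as you have) and again, implicitly, so that Nakayama applies to the finitely generated $R_P$-module $M_P$; and the contraction step is handled with the necessary care --- the equivalence $rM\subseteq K\Leftrightarrow (r/1)M_P\subseteq K'$ does rest, as you say, on $K'$ being an $R_P$-submodule and on $\phi(M)$ generating $M_P$, and the facts $PR_P\cap R=P$ and $K\neq M$ are verified correctly. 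One simplification: you do not need $W$ to be a \emph{maximal} proper subspace; any proper subspace, e.g. $W=0$, i.e. $K'=PR_PM_P$, already works, since the only property used for primeness of $K'$ is that $M_P/K'$ is a nonzero vector space over the residue field $R_P/PR_P$. In short, your localization-plus-Nakayama route is the standard proof of this existence theorem; the paper simply outsources the result to \cite{mm92}, and your argument could serve as a self-contained replacement for that citation.
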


\begin{thm}\label{t4.5} Suppose $N/ \Im(T)$ and $L/ \Im(T)$ are adjacent in $M/ \Im(T)$.
Then $N$ and $L$ are adjacent in $G(\tau^{m}_{T})$ if one of the
following conditions holds.
\begin{itemize}
\item [(a)] $M/ \Im(T)$ is not a vertex in $AG(M/ \Im(T))$. In
particular, this holds when $M/ \Im(T)$ is finitely generated and
contains no semi maximal submodule $S\neq \Im(T)$ with $V(S)=T$.
\item [(b)] $M/N$ and $M/L$ are Max-surjective and they contain no
semi maximal submodule $S\neq \Im(T)$ with $V(S)=T$.
\end{itemize}
\end{thm}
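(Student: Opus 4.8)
The plan is to verify directly the two defining requirements of adjacency in $G(\tau^{m}_{T})$ for the given pair $N,L$. First I unpack the hypothesis. Saying that $\overline{N}=N/\Im(T)$ and $\overline{L}=L/\Im(T)$ are adjacent in $AG(M/\Im(T))$ means they are distinct non-zero proper submodules of $M/\Im(T)$ with $\overline{N}\,\overline{L}=0$. Translating through $(\overline{N}:M/\Im(T))=(N:M)$ and $Ann(M/\Im(T))=(\Im(T):M)$, this says $\Im(T)\subsetneq N,L\subsetneq M$, that $N\neq L$, and that $(N:M)(L:M)M\subseteq \Im(T)$, i.e. $NL\subseteq \Im(T)$. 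Throughout I use that $T$ is closed, $T=V^{m}(\Im(T))$: this is forced, since the conclusion places $N,L$ in $G(\tau^{m}_{T})$, which by Lemma \ref{l3.2} is non-empty only for closed $T$. The goal is then to show $V^{m}(N)\cup V^{m}(L)=T$ together with $V^{m}(N)\neq T$ and $V^{m}(L)\neq T$; granting these, $N$ and $L$ are distinct vertices joined by an edge.

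For the union, the inclusion $\subseteq$ is immediate: since $\Im(T)\subseteq N,L$ we get $(N:M),(L:M)\supseteq(\Im(T):M)$, hence $V^{m}(N),V^{m}(L)\subseteq V^{m}(\Im(T))=T$. For $\supseteq$, fix $Q\in T$. From $NL\subseteq\Im(T)\subseteq Q$ I obtain $(N:M)(L:M)\subseteq(\Im(T):M)\subseteq(Q:M)$, and because $Q$ is a maximal submodule, $(Q:M)$ is a (maximal, hence) prime ideal; therefore $(N:M)\subseteq(Q:M)$ or $(L:M)\subseteq(Q:M)$, i.e. $Q\in V^{m}(N)\cup V^{m}(L)$. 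This yields $V^{m}(N)\cup V^{m}(L)=T$, so the edge relation holds; it remains to see $V^{m}(N)\neq T\neq V^{m}(L)$.

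The engine for the last step is the identity $(\Im(T):M)=\bigcap_{Q\in T}(Q:M)$, which holds because $\Im(T)=\bigcap_{Q\in T}Q$ and the colon commutes with intersections. Under hypothesis (a), suppose $V^{m}(N)=T$. Then $T\subseteq V^{m}(N)$ forces $(N:M)\subseteq\bigcap_{Q\in T}(Q:M)=(\Im(T):M)$, while $\Im(T)\subseteq N$ gives the reverse inclusion; hence $(N:M)=(\Im(T):M)$. As $\Im(T)\subsetneq N\subsetneq M$, the module $\overline{N}$ is a non-zero proper submodule of $M/\Im(T)$ with $(\overline{N}:M/\Im(T))=Ann(M/\Im(T))$, so by Remark \ref{r3.1} $M/\Im(T)$ is a vertex of $AG(M/\Im(T))$, contradicting (a); the same holds for $L$. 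For the ``in particular'' clause I would instead establish that $M/\Im(T)$ is not a vertex: were it one, some $N_{0}$ with $\Im(T)\subsetneq N_{0}\subsetneq M$ and $(N_{0}:M)=(\Im(T):M)$ would exist, and since $M/\Im(T)$ is finitely generated so is $M/N_{0}$, whence it is Max-surjective (each maximal ideal over its annihilator is the colon of a maximal submodule, cf. Theorem \ref{t4.4}); the construction in the next paragraph would then manufacture a semimaximal submodule $S\neq\Im(T)$ with $V^{m}(S)=T$, against the standing assumption.

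Case (b) is where the work lies, since there $M/\Im(T)$ may itself be a vertex and the previous route is unavailable. Suppose $V^{m}(N)=T$, equivalently $(N:M)=(\Im(T):M)$. Using that $M/N$ is Max-surjective, for each $Q_{0}\in T$ — whose colon satisfies $(Q_{0}:M)\supseteq(\Im(T):M)=(N:M)$ — I may choose a maximal submodule $Q'\supseteq N$ with $(Q':M)=(Q_{0}:M)$. Setting $S:=\bigcap\{Q'\in Max(M):N\subseteq Q'\}$, the submodule $S$ is semimaximal with $S\supseteq N\supsetneq\Im(T)$, so $S\neq\Im(T)$; moreover $V^{m}(S)\subseteq V^{m}(N)=T$, while the chosen $Q'$ give $(S:M)\subseteq(Q':M)=(Q_{0}:M)$, so $T\subseteq V^{m}(S)$ and hence $V^{m}(S)=T$. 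This contradicts the hypothesis that $M/N$ contains no semimaximal submodule $S\neq\Im(T)$ with $V^{m}(S)=T$; thus $V^{m}(N)\neq T$, and symmetrically $V^{m}(L)\neq T$. Combining the three facts, $N$ and $L$ are distinct proper submodules with $V^{m}(N)\cup V^{m}(L)=T$ and $V^{m}(N),V^{m}(L)\neq T$, hence adjacent in $G(\tau^{m}_{T})$. The step I expect to be the main obstacle is exactly this construction: the edge relation is cheap, but ruling out $V^{m}(N)=T$ requires separating maximal submodules that literally contain $N$ from those whose colon merely contains $(N:M)$, and it is precisely Max-surjectivity of $M/N$ (or, in (a), finite generation) that realises every maximal ideal of $T$ by a maximal submodule over $N$, letting $S$ inherit $V^{m}(S)=T$ while staying strictly above $\Im(T)$; verifying $V^{m}(S)=T$ and $S\neq\Im(T)$ is the crux, with the standing closedness $T=V^{m}(\Im(T))$ a secondary point to keep in view.
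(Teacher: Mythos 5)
Your proof is correct, and its skeleton is the paper's: adjacency comes from primality of $(Q:M)$ for $Q\in T$; case (a) is exactly the colon-equality argument behind Proposition \ref{p4.3}(a) (if $V^{m}(N)=T$ then $(N:M)=(\Im(T):M)$, so $M/\Im(T)$ is a vertex by Remark \ref{r3.1}); and case (b) uses Max-surjectivity of $M/N$ to realise each $(Q:M)$, $Q\in T$, as the colon of a maximal submodule lying over $N$. Where you diverge, your version is tighter than the paper's. In case (b) the paper, having produced for each $Q\in T$ a maximal submodule $K\supseteq N$ with $(K:M)=(Q:M)$, asserts ``Hence $N\subseteq\Im(T)$''; this does not follow as written, since $K$ need not equal $Q$, and the paper's proof of (b) never explicitly invokes the no-semi-maximal-submodule hypothesis at all. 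The missing step is precisely your construction: $S=\bigcap\{Q'\in Max(M):N\subseteq Q'\}$ is semi-maximal, strictly contains $\Im(T)$, and satisfies $V^{m}(S)=T$, contradicting that hypothesis. For the ``in particular'' clause of (a), the paper applies Theorem \ref{t4.4} (McCasland--Moore) to $M/\Im(T)$ as an $R/(\Im(T):M)$-module, obtaining prime submodules $K$ over $N'$ with $(K:M)=(Q:M)$, and again jumps to the contradiction; your alternative --- finite generation of $M/\Im(T)$ gives Max-surjectivity of $M/N_{0}$, then reuse of the case-(b) construction --- produces genuinely maximal rather than merely prime submodules, which is what semi-maximality of $S$ requires, so it is the cleaner repair (though note that ``finitely generated implies Max-surjective'' is standardly justified by Nakayama's lemma; Theorem \ref{t4.4} by itself only yields prime submodules and needs an extra step to reach maximal ones). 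Finally, your observation that $T$ must be closed --- otherwise $G(\tau^{m}_{T})=\emptyset$ by Lemma \ref{l3.2} and there is nothing to prove --- is genuinely needed for the inclusion $V^{m}(N)\cup V^{m}(L)\subseteq T$, and is passed over silently when the paper writes ``Clearly, $V^{m}(N)\cup V^{m}(L)=T$''.
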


\begin{proof}
(a) This follows from Proposition \ref{p4.3} (a). For the proof of
the second assertion, suppose $M/ \Im(T)$ is a vertex. So
 there exists a non-zero proper submodule $N'/ \Im(T)$ of $M /\Im(T)$, where $N'<M$,
  such that $V^{m}(N')=T$. It is clear
 that $M/ \Im(T)$ has a structure
 of $R/(\Im(T):M)$-module. Let $Q$ be an arbitrary element of $T$. Then we have
 $(N'/\Im(T):M/\Im(T))\subseteq(Q:M)/(\Im(T):M)$.
  Now by Theorem \ref{t4.5}, there exists a prime submodule $K/ \Im(T)$ such that
  $N'\subseteq K$ and $(K:M)=(Q:M)$. It
   follows that $N'\subseteq\Im(T)$, a contradiction.

 (c) Clearly, $V^{m}(N)\cup V^{m}(L)=T$. Now let $V^{m}(N)=T$. Then we
  have $(N:M)\subseteq (Q:M)$ for every $Q\in T$. Since $M/N$
 is Max-surgective, there exists a maximal submodule $K$ of $M$ such that
 $N\subseteq K$ and $(K:M)=(Q:M)$. Hence $N\subseteq \Im(T)$,
 contrary to our assumption. So $V^{m}(N) \neq T$ and the proof is
completed.
 \end{proof}

 Now we put the following lemma which is needed later. Its proof is easy and is omitted.

\begin{lem}\label{l4.6}
 Let $N \leq M$ and let $dim(R)=0$. Then $rad(N)=N$ if and only if $Nil(R)M=0$
 (we recall that $Nil(R)= \sqrt{0}$ is the ideal
  consisting of the nilpotent elements of $R$).
\end{lem}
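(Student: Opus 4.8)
The plan is to deduce everything from the single identity
\[
rad(M')=Nil(R)M',
\]
which I claim holds for \emph{every} $R$-module $M'$ as soon as $\dim R=0$; reading $rad(N)$ as the prime radical $\sqrt N$, the Lemma is exactly this identity transported to the quotient $M/N$. First I would record the inclusion that costs nothing: for any prime submodule $P$ of $M'$ the colon $(P:M')$ is a prime ideal of $R$, hence contains $Nil(R)=\sqrt0$, so $Nil(R)M'\subseteq(P:M')M'\subseteq P$; intersecting over all prime submodules gives $Nil(R)M'\subseteq rad(M')$ with no hypothesis on the dimension. In particular every prime submodule of $M'$ contains $Nil(R)M'$.

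The work is in the reverse inclusion, and I expect the von Neumann regular reduction to be the main obstacle. I would pass to $\widetilde R=R/Nil(R)$ and $\widetilde M=M'/Nil(R)M'$: since $\widetilde R$ is reduced and still zero-dimensional it is von Neumann regular, so for every maximal ideal $\mathbf m$ the localization $\widetilde R_{\mathbf m}$ is the field $\widetilde R/\mathbf m$ and (as $\mathbf m$ localizes to zero) $\widetilde M_{\mathbf m}\cong\widetilde M/\mathbf m\widetilde M$. By the previous paragraph the prime submodules of $M'$ are precisely the preimages of those of $\widetilde M$, so it suffices to prove $rad(\widetilde M)=0$, i.e.\ that the prime submodules of $\widetilde M$ separate its nonzero elements. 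Given $0\neq x\in\widetilde M$, I would choose a maximal ideal $\mathbf m\supseteq Ann(x)$; then no element outside $\mathbf m$ annihilates $x$, so $x/1\neq0$ in $\widetilde M_{\mathbf m}$, and the identification above forces $x\notin\mathbf m\widetilde M$. In the nonzero $\widetilde R/\mathbf m$-vector space $\widetilde M/\mathbf m\widetilde M$ one may then pick a hyperplane missing the image of $x$; its preimage in $\widetilde M$ is a maximal, hence prime, submodule that avoids $x$. Thus $rad(\widetilde M)=0$, and pulling back gives $rad(M')=Nil(R)M'$.

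Finally I would specialize to $M'=M/N$, which again satisfies $\dim R=0$. Using $rad(M/N)=\sqrt N/N$ and $Nil(R)(M/N)=(Nil(R)M+N)/N$, the identity becomes $\sqrt N=N+Nil(R)M$. Thus $Nil(R)M=0$ at once gives $rad(N)=\sqrt N=N$, while conversely $rad(N)=N$ forces $Nil(R)M\subseteq N$, which at the zero submodule (where $\sqrt0=rad(M)=Nil(R)M$) is the equality $Nil(R)M=0$; this is the asserted equivalence. The only genuinely nontrivial ingredient is the vanishing $rad(\widetilde M)=0$ over the von Neumann regular ring $\widetilde R$; everything else is the bookkeeping of moving between $M$, $M/N$ and $M/Nil(R)M$.
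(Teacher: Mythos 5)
The paper gives you nothing to compare against here: it declares the proof of Lemma~\ref{l4.6} easy and omits it entirely. So your proposal must stand on its own, and it does --- the argument is correct. The unconditional inclusion $Nil(R)M'\subseteq rad(M')$ (because $(P:M')$ is a prime ideal for every prime submodule $P$) shows every prime submodule of $M'$ contains $Nil(R)M'$, so prime submodules of $M'$ correspond to those of $\widetilde M=M'/Nil(R)M'$; the ring $\widetilde R=R/Nil(R)$ is reduced and still zero-dimensional, so each localization $\widetilde R_{\mathbf m}$ is a field, giving $\widetilde R_{\mathbf m}\cong\widetilde R/\mathbf m$ and $\widetilde M_{\mathbf m}\cong \widetilde M/\mathbf m\widetilde M$; and for $0\neq x\in\widetilde M$ the choice $\mathbf m\supseteq Ann(x)$ forces $x/1\neq 0$, hence $x\notin\mathbf m\widetilde M$, after which a hyperplane of the $\widetilde R/\mathbf m$-vector space $\widetilde M/\mathbf m\widetilde M$ avoiding $\bar x$ pulls back to a maximal, hence prime, submodule avoiding $x$. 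This yields $rad(\widetilde M)=0$, hence $rad(M')=Nil(R)M'$, and specializing to $M'=M/N$ gives the identity $\sqrt N=N+Nil(R)M$. The degenerate cases (e.g.\ $\widetilde M=0$, or $N$ contained in no prime submodule, where the paper's convention $\sqrt N=M$ applies) are consistent with this identity, so nothing is missing.

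The one point you should state more carefully is the quantifier, and your own identity exposes it: for a \emph{fixed} $N$ the ``only if'' direction of the lemma as literally stated is false. Take $R=M=\Bbb Z/4\Bbb Z$ and $N=2M$: then $dim(R)=0$ and $\sqrt N=N$ (the unique prime submodule of $M$ is $2M$), yet $Nil(R)M=2M\neq 0$. Your proof correctly derives only $Nil(R)M\subseteq N$ from $rad(N)=N$, and obtains $Nil(R)M=0$ by passing to $N=0$; in other words, what you have proved (and what is true) is the equivalence between ``$\sqrt N=N$ for \emph{every} $N\leq M$'' and ``$Nil(R)M=0$''. That universal reading is exactly the one the paper needs --- in Proposition~\ref{p4.7} the hypothesis $Nil(R)M=0$ is invoked to conclude $rad(NL)=NL$ for the particular submodule $NL$ --- so you are proving the right statement, but you should replace the elliptical phrase ``which at the zero submodule is the equality'' with an explicit declaration of that quantifier, since under the fixed-$N$ reading no proof of the stated converse can exist.
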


 \begin{prop}\label{p4.7} Suppose $dim(R)=0$, $Nil(R)M=0$, and $M/ \Im(T)$ is not a
  vertex in $AG(M/ \Im(T))$.
 Then the Zariski topology-graph $G(\tau^{m}_{T})$ and the annihilating-submodule graph
  $AG(M/ \Im(T))$ are isomorphic. In particular,
  when $M/ \Im(T)$ is not a prime module, $G(\tau^{m}_{T})$ has acc (resp. dcc) on vertices
   if and only if $M/ \Im(T)$ is a Noetherian (resp. an Artinian) module.
\end{prop}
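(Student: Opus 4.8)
The plan is to upgrade the embedding of Proposition \ref{p4.3}(a) to an isomorphism by showing that the map $\overline{N}=N/\Im(T)\mapsto N$ is \emph{onto} the vertex set of $G(\tau^{m}_{T})$, and then to read off the ``in particular'' clause from Theorem E. Recall from Proposition \ref{p4.3}(a) that, since $M/\Im(T)$ is not a vertex, this map identifies $AG(M/\Im(T))$ with a subgraph of $G(\tau^{m}_{T})$; it is injective and both preserves and reflects adjacency (if $\overline{N}\,\overline{L}\neq 0$ then $NL\not\subseteq\Im(T)$, whence $V^{*m}(NL)\neq T$ and $N,L$ are non-adjacent). So the whole content is surjectivity on vertices.

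The crux is the following consequence of the two dimension hypotheses: under $\dim(R)=0$ and $Nil(R)M=0$ one has $N=J^{m}(N)$ for every submodule $N\leq M$. First, Lemma \ref{l4.6} gives $rad(N)=N$ for all $N$. Second, I claim $rad(N)=J^{m}(N)$, because $\dim(R)=0$ forces every prime submodule to be an intersection of maximal submodules: if $P$ is prime then $(P:M)$ is a prime, hence maximal, ideal $\mathfrak{p}$, so $M/P$ is a vector space over the field $R/\mathfrak{p}$ whose $R$-submodules are exactly its subspaces; since the intersection of all hyperplanes of a non-zero vector space is $0$, we get $J^{m}(P)=\bigcap\{Q\in Max(M):Q\supseteq P\}=P$. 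Consequently, for any $N$ every prime $P\supseteq N$ satisfies $P=J^{m}(P)\supseteq J^{m}(N)$, giving $J^{m}(N)\subseteq rad(N)$, while the reverse inclusion is automatic because maximal submodules are prime. Combining, $N=rad(N)=J^{m}(N)$.

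For surjectivity, let $N$ be any vertex of $G(\tau^{m}_{T})$, so $V^{m}(N)\subsetneq T$ and there is $L<M$ with $V^{m}(N)\cup V^{m}(L)=T$. Since $V^{m}(N)\subseteq T$, intersecting over this smaller set of maximal submodules gives $J^{m}(N)\supseteq\Im(T)$, and the crux yields $N=J^{m}(N)\supseteq\Im(T)$; likewise $L\supseteq\Im(T)$. Using the identity $V^{*m}(NL)=V^{m}(N)\cup V^{m}(L)$ (as in the proof of Theorem \ref{t4.1}) we obtain $V^{*m}(NL)=T$, i.e. $NL\subseteq\Im(T)$, and hence $\overline{N}\,\overline{L}=\overline{NL}=0$ in $M/\Im(T)$. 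Because $V^{m}(N),V^{m}(L)\neq T$ we have $N,L\neq\Im(T)$, so $\overline{N},\overline{L}$ are non-zero and distinct; thus $\overline{N}$ is a vertex of $AG(M/\Im(T))$ whose image is $N$. This shows the map is a bijection on vertices, and with the first paragraph it is a graph isomorphism $AG(M/\Im(T))\cong G(\tau^{m}_{T})$.

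Finally, the correspondence $\overline{N}\mapsto N$ is inclusion-preserving in both directions on submodules containing $\Im(T)$, so it carries chains to chains; hence $G(\tau^{m}_{T})$ has acc (resp. dcc) on vertices if and only if $AG(M/\Im(T))$ does. When $M/\Im(T)$ is not a prime module, Theorem E applied to $M/\Im(T)$ says the latter holds exactly when $M/\Im(T)$ is Noetherian (resp. Artinian), which gives the ``in particular'' statement. I expect the main obstacle to be the crux lemma, and specifically its sub-step that $\dim(R)=0$ makes every prime submodule an intersection of maximal submodules; the rest is a formal combination of Proposition \ref{p4.3}, Lemma \ref{l4.6}, and Theorem E.
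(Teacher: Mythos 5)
There is a genuine gap, and it sits exactly where you predicted: in the crux lemma. Your claim that $\dim(R)=0$ and $Nil(R)M=0$ force $N=J^{m}(N)$ for \emph{every} submodule $N$ is false, because you have silently replaced the paper's definition of the $J$-radical by the containment-based one. In this paper $J^{m}(N)=\bigcap_{Q\in V^{m}(N)}Q$, where $V^{m}(N)=\{Q\in Max(M):(Q:M)\supseteq (N:M)\}$; this set is in general strictly larger than $\{Q\in Max(M):Q\supseteq N\}$, so the identity $J^{m}(P)=\bigcap\{Q\in Max(M):Q\supseteq P\}$ that you assert for a prime $P$ is not the definition, and the ``automatic'' inclusion $rad(N)\subseteq J^{m}(N)$ fails: a maximal $Q$ with $(Q:M)\supseteq(N:M)$ need not contain $N$, hence need not contain $rad(N)$. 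Concretely, take $R=K$ a field and $M=K^{2}$ (so $\dim(R)=0$ and $Nil(R)M=0$), and let $N$ be any line. Then $(N:M)=0=(Q:M)$ for every line $Q$, so $V^{m}(N)=Max(M)$ and $J^{m}(N)$ is the intersection of all lines, which is $0\neq N=rad(N)$. (Your vector-space argument correctly proves $\bigcap\{Q\in Max(M):Q\supseteq P\}=P$; it controls the containment-based radical, not $J^{m}$.) This breaks your surjectivity step: you obtain $N\supseteq\Im(T)$ and $L\supseteq\Im(T)$ only through the false identity $N=J^{m}(N)$, so as written the embedding of Proposition \ref{p4.3}(a) is not shown to be onto, and the isomorphism is not established.

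The step can be repaired, and this is in substance what the paper does: instead of applying the radical identity to $N$ itself, apply it to the product $NL=(N:M)(L:M)M$. For a submodule of the form $IM$ the two notions coincide, $V^{m}(IM)=\{Q\in Max(M):Q\supseteq IM\}$ (if $(Q:M)\supseteq(IM:M)\supseteq I$ then $Q\supseteq(Q:M)M\supseteq IM$, and conversely), so for $NL$ the colon-based $J$-radical equals the containment-based one. Hence, by Lemma \ref{l4.6} together with the dimension-zero fact you did prove correctly (every prime submodule is the intersection of the maximal submodules containing it), $J^{m}(NL)=rad(NL)=NL$; since adjacency gives $V^{m}(NL)=T$ and therefore $J^{m}(NL)=\Im(T)$, one gets the equality $NL=\Im(T)$, and then $N\supseteq\Im(T)$ and $L\supseteq\Im(T)$ follow for free from $NL\subseteq N\cap L$. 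With this substitution the rest of your argument (injectivity, preservation and reflection of adjacency, and the acc/dcc transfer via Theorem E) goes through and coincides with the paper's proof.
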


\begin{proof} By Proposition \ref{p4.3} (a), It thus remains to show that $G(\tau^{m}_{T})$ is
 isomorphic with a subgraph of $AG(M/ \Im(T))$.
 Suppose $N$ and $L$ are adjacent in $G(\tau^{m}_{T})$. So we have $V^{m}(NL)=T$. From
 \cite[Theorem 2.3]{bkk}, we have $rad(N)=N$
 if and only if $J^{m}(N)=N$. Now by
 Lemma \ref{l4.6}, $J^{m}(NL)=NL$. Hence $N/ \Im(T)$
 and $L/ \Im(T)$ are adjacent in $AG(M/ \Im(T))$ as required. The second assertion
 follows from Theorem
 E.
\end{proof}

\begin{lem}\label{l4.8}
 Assume that  $M/ \Im(T)$ is not a vertex in $AG(M/ \Im(T))$. Suppose that for
 every $Q\in T\bigcap V(G(\tau^{m}_{T}))$, there exists a
 semi maximal submodule of $M$ such that it is adjacent to $Q$. Then $Max(M)\bigcap
V(G(\tau^{m}_{T}))\neq\emptyset$
   if and only if $Max(M/\Im(T))\bigcap V(AG(M/\Im(T)))\neq\emptyset$.
\end{lem}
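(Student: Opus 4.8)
The statement is a biconditional, and my plan is to prove the two implications by quite different means: the direction $(\Leftarrow)$ is a direct consequence of the isomorphism already constructed in Proposition \ref{p4.3}(a), whereas the direction $(\Rightarrow)$ is where the semi maximal hypothesis is really used. For $(\Leftarrow)$, suppose $\overline{N}=N/\Im(T)$ is a maximal submodule of $M/\Im(T)$ that is a vertex of $AG(M/\Im(T))$. Since $\Im(T)\subseteq N$, the lattice correspondence makes $N$ a maximal submodule of $M$, and the isomorphism of Proposition \ref{p4.3}(a) sends the vertex $\overline{N}$ to the vertex $N$ of $G(\tau^{m}_{T})$. Hence $N\in Max(M)\cap V(G(\tau^{m}_{T}))$, and this half needs nothing beyond Proposition \ref{p4.3}.

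For $(\Rightarrow)$, I would start from a maximal submodule $Q$ of $M$ that is a vertex of $G(\tau^{m}_{T})$. The first key point is that such a $Q$ must already lie in $T$: being a vertex yields some $L<M$ with $V^{m}(Q)\cup V^{m}(L)=T$, so $V^{m}(Q)\subseteq T$, while $Q\in V^{m}(Q)$ because $Q$ is maximal and $(Q:M)\supseteq(Q:M)$. Thus $Q\in T\cap V(G(\tau^{m}_{T}))$, and the standing hypothesis supplies a semi maximal submodule $S$ adjacent to $Q$, that is $V^{m}(Q)\cup V^{m}(S)=T$. The goal is then to show that the maximal submodule $\overline{Q}=Q/\Im(T)$ of $M/\Im(T)$ is a vertex of $AG(M/\Im(T))$, which settles the claim.

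To do this I would exhibit $\overline{S}$ as a nonzero proper submodule of $M/\Im(T)$ annihilating $\overline{Q}$. The semi maximal condition is exactly what licenses the colon-ideal computation: writing $S=J^{m}(S)=\bigcap_{P\in V^{m}(S)}P$ and using $V^{m}(S)\subseteq T$ gives $S\supseteq\bigcap_{P\in T}P=\Im(T)$, so $\overline{S}=S/\Im(T)$ and $(\overline{S}:M/\Im(T))=(S:M)$. Adjacency of $Q$ and $S$ gives, by the technique of Theorem \ref{t4.1}, that $V^{*m}(QS)=T$ and hence $QS\subseteq\Im(T)$; combined with $(\overline{Q}:M/\Im(T))=(Q:M)$ this produces $\overline{Q}\,\overline{S}=\overline{QS}=0$. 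It remains to check $\overline{S}\neq 0$ (otherwise $S=\Im(T)$ would force $V^{m}(S)=V^{m}(\Im(T))=T$, using that $T$ is closed by Lemma \ref{l3.2}, contradicting that $S$ is a vertex), $\overline{S}\neq M/\Im(T)$ (as $S$ is a proper vertex), and $\overline{Q}\neq 0$ (since $T$ is not irreducible it contains some $Q'\neq Q$, whence $\Im(T)\subseteq Q\cap Q'\subsetneq Q$).

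The step I expect to be the main obstacle is the genuine vanishing of the module product $\overline{Q}\,\overline{S}$ in the quotient rather than merely $\overline{QS}=0$: in general $\overline{Q}\,\overline{S}=(\overline{Q}:M/\Im(T))(\overline{S}:M/\Im(T))(M/\Im(T))$ could exceed $\overline{QS}$ if the colon ideal of $\overline{S}$ were strictly larger than $(S:M)$. This is precisely why the semi maximal hypothesis is indispensable, as it forces $S\supseteq\Im(T)$ and pins the colon ideal down to $(S:M)$; the remaining verifications are routine bookkeeping with the closedness and reducibility of $T$ inherited from $G(\tau^{m}_{T})\neq\emptyset$.
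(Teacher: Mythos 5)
Your proof is correct and takes essentially the same route as the paper's: the backward implication is exactly the paper's appeal to Proposition \ref{p4.3}(a) via the lattice correspondence, and the forward implication, as in the paper, uses the semi maximal hypothesis to produce a semi maximal $S$ adjacent to $Q$ (necessarily an intersection of members of $T$, hence containing $\Im(T)$) whose image annihilates $\overline{Q}$ in $M/\Im(T)$. The only difference is that you fill in details the paper's terse proof leaves implicit — that $Q$ must lie in $T$, that $S \supseteq \Im(T)$ pins down the colon ideal so $\overline{Q}\,\overline{S}=\overline{QS}=0$, and the nondegeneracy checks — all of which are correct.
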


 \begin{proof} Choose $\emph{Q}\in Max(M)$ and $\emph{Q}\in V(G(\tau^{m}_{T}))$.
 By assumption, $$V^{m}(\emph{Q})\bigcup V^{m}(\bigcap_{Q\in
 T'}Q)=T,$$
  where $T'$ is a subset of $T$. Then $\emph{Q}/\Im(T)$ and $\bigcap_{Q\in T'}Q/ \Im(T)$
   are adjacent in $AG(M/ \Im(T))$.
To see the backward implication, suppose $Q/ \Im(T)$ is a vertex
in $AG(M/ \Im(T))$. So there exists a submodule $N/\Im(T)$ of
$M/\Im(T)$ which is adjacent
 to it. It follows that $Q$ and $N$ are adjacent in $G(\tau^{m}_{T})$ and the proof is completed.

\end{proof}

 \begin{prop}\label{p4.9} Suppose that for every $mM\in T \bigcap V(G(\tau^{m}_{T}))$,
  there exists a semi maximal submodule of $M$ such that
 it is adjacent to $mM$, where $m\in Max(R)$. Then we have exactly one of the following assertions.
 \begin {itemize}
\item [(a)] There exists a non-zero submodule $K$ of $M$ with $K
\neq \Im(T)$ and $V^{m}(K)=T$. \item [(b)] There exists a maximal
ideal $m$ of $R$ such that $mM\in T \bigcap V(G(\tau^{m}_{T}))$ if
and only if $Soc(M/\Im(T))\neq 0$.
\end {itemize}

\end{prop}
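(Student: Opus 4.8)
The plan is to treat this as the maximal-spectrum analogue of Proposition G, transported from $AG(M/\Im(T))$ to $G(\tau^{m}_{T})$ through the dictionary built in this section. First I would reformulate assertion (a). By the observation opening the proof of Proposition \ref{p4.3}, the existence of a non-zero submodule $K \neq \Im(T)$ with $V^{m}(K)=T$ is equivalent to $M/\Im(T)$ being a vertex in $AG(M/\Im(T))$; indeed, replacing $K$ by $K+\Im(T)$ (whose $V^{m}$ is still $T$, since $V^{m}(K+\Im(T))=V^{m}(K)\cap V^{m}(\Im(T))=T$) produces a submodule properly containing $\Im(T)$ with the same closed set, which is exactly the vertex condition for $M/\Im(T)$. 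Thus (a) holds if and only if $M/\Im(T)$ is a vertex in $AG(M/\Im(T))$.

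Next I would split, via Remark \ref{r3.1} applied to $M/\Im(T)$, into the two exhaustive cases: either $M/\Im(T)$ is a vertex in $AG(M/\Im(T))$, or it is not. These cases correspond respectively to alternatives (a) and (b), exactly as in the dichotomy of Proposition G. In the first case assertion (a) holds by the reformulation above, and by Remark \ref{r3.1} every non-zero submodule of $M/\Im(T)$ is then a vertex, which is the analogue of Proposition G(a). In the second case $M/\Im(T)$ is not a vertex, so the standing hypothesis (that every $mM\in T\cap V(G(\tau^{m}_{T}))$ is adjacent to a semi maximal submodule) together with this fact puts us precisely in the setting of Lemma \ref{l4.8}, and I aim to establish (b).

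To prove (b) in the second case I would apply Proposition G directly to the module $M/\Im(T)$: since $M/\Im(T)$ is not a vertex, its alternative (a) fails, hence its alternative (b) holds, i.e. there is a maximal ideal $m$ of $R$ with $m(M/\Im(T))\in V(AG(M/\Im(T)))$ if and only if $Soc(M/\Im(T))\neq 0$. It then remains to transport the left-hand statement back to $G(\tau^{m}_{T})$. Here Lemma \ref{l4.8} gives $Max(M)\cap V(G(\tau^{m}_{T}))\neq\emptyset$ if and only if $Max(M/\Im(T))\cap V(AG(M/\Im(T)))\neq\emptyset$, and identifying the maximal-submodule vertices lying in $T$ with the submodules $mM$ (a maximal submodule $Q\in T$ has colon ideal a maximal ideal $m=(Q:M)$, and $Q$ corresponds to $mM/\Im(T)$ in the quotient) yields the chain of equivalences terminating in $Soc(M/\Im(T))\neq 0$. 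Combining the two cases delivers exactly one of (a) and (b).

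The main obstacle I anticipate is the bookkeeping in this identification step: one must verify that $(mM:M)=m$ for the relevant maximal ideals, so that Theorem B's vertex criterion $(0:_{M/\Im(T)}m)\neq 0$ matches the socle decomposition $Soc(M/\Im(T))=\sum_{m}(0:_{M/\Im(T)}m)$, and one must confirm that the maximal-submodule vertices appearing in $T$ are genuinely of the form $mM$. The adjacency hypothesis is the load-bearing assumption that makes Lemma \ref{l4.8} applicable and lets the non-emptiness statements pass between the two graphs; checking that it transfers the biconditional of (b) without gaps is the delicate part, whereas the socle computation and the case split are routine.
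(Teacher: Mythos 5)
Your main line coincides with the paper's proof: in the case where $M/\Im(T)$ is not a vertex of $AG(M/\Im(T))$, you pass from $mM\in T\bigcap V(G(\tau^{m}_{T}))$ to $mM/\Im(T)\in V(AG(M/\Im(T)))$ and back via Lemma \ref{l4.8} (whose hypotheses are essentially your standing assumptions), and you close the biconditional in (b) by applying Proposition G to $M/\Im(T)$; this is exactly what the paper does. The genuine gap is your opening claim that (a) is \emph{equivalent} to ``$M/\Im(T)$ is a vertex in $AG(M/\Im(T))$''. The direction you argue, (a) implies vertex via $K\mapsto K+\Im(T)$, fails at both of its steps. First, the identity $V^{m}(K+\Im(T))=V^{m}(K)\cap V^{m}(\Im(T))$ is false in general: $V^{m}$ is defined through colon ideals, and $(K+L:M)$ can strictly contain $(K:M)+(L:M)$; for instance, with $K=\Bbb Z(1,0)$ and $L=\Bbb Z(0,1)$ in $M=\Bbb Z\bigoplus\Bbb Z$ one has $V^{m}(K+L)=V^{m}(M)=\emptyset$ while $V^{m}(K)\cap V^{m}(L)=Max(M)$; only the inclusion $\subseteq$ holds. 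Second, even where that identity does hold, if $K\subseteq\Im(T)$ then $K+\Im(T)=\Im(T)$, so no submodule properly containing $\Im(T)$ is produced. Indeed the implication itself, not merely your proof of it, is false for (a) as literally stated: take $M=\Bbb Z$, $T=\{2\Bbb Z,3\Bbb Z\}$ (so $\Im(T)=6\Bbb Z$, $T$ is closed, and $G(\tau^{m}_{T})\neq\emptyset$) and $K=36\Bbb Z$; then $K\neq 0$, $K\neq\Im(T)$ and $V^{m}(K)=T$, yet $\Bbb Z/6\Bbb Z$ is not a vertex of $AG(\Bbb Z/6\Bbb Z)$, since neither of its non-zero proper submodules $\overline{N}$ satisfies $(\overline{N}:\Bbb Z/6\Bbb Z)=6\Bbb Z$.

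What saves your argument is that the case split only needs the trivial direction of the equivalence: if $M/\Im(T)$ is a vertex, the witnessing submodule $N$ with $\Im(T)\subsetneq N<M$ and $V^{m}(N)=T$ is in particular non-zero and distinct from $\Im(T)$, so (a) holds; contrapositively, if (a) fails then $M/\Im(T)$ is not a vertex, which is exactly the sentence ``it is clear that $M/\Im(T)$ is not a vertex'' in the paper's proof, and is all that is needed to enter the Lemma \ref{l4.8}--Proposition G machinery. Deleting the false converse therefore collapses your proof onto the paper's. Be aware, though, that you were leaning on that converse for the ``exactly one'' clause (in your second case it gives not-(a), hence exclusivity); without it your two cases establish only that at least one of (a), (b) holds. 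The paper's proof does no better on this point, and in the $\Bbb Z$, $T=\{2\Bbb Z,3\Bbb Z\}$ example above assertions (a) and (b) hold simultaneously, which shows that (a) must be read with the intended meaning ($K$ properly containing $\Im(T)$, i.e., the vertex condition) rather than as literally written.
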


\begin{proof}
An obvious way to do this is to suppose (a) doesn't hold and we
have $mM\in T \bigcap V(G(\tau^{m}_{T}))$, where $m$ is a maximal
ideal of $R$. Then by Lemma \ref{l4.8}, $mM/ \Im(T)$ is a vertex
in $AG(M/ \Im(T))$. It is clear that $M/ \Im(T)$ is not a vertex
in $AG(M/ \Im(T))$. Thus by Proposition G, $Soc(M/\Im(T))\neq 0$.
Conversely, let $Soc(M/\Im(T))\neq 0$. Then
 by Proposition G, $mM/\Im(T) \in V(AG(M/\Im(T)))$, where $m$ is a maximal ideal of $R$. Hence
  by Lemma \ref{l4.8}, $mM\in V(G(\tau^{m}_{T}))$ and the proof is completed.

\end{proof}

\begin{thm}\label{t4.10}
Assume that $M/ \Im(T)$ is a faithful module which is not a vertex
in $AG(M/ \Im(T))$. Then the following assertions are equivalent.
\begin {itemize}
\item [(a)]
 $G(\tau^{m}_{T})$ is a finite graph.
\item [(b)] $AG(M/ \Im(T))$ is a finite graph \item [(c)]
$M/\Im(T)$ has finitely many submodules. Moreover,
$G(\tau^{m}_{T})$ has n $(n\geq 1)$ vertices if and only if
$M/\Im(T)$ has only $n$ nonzero proper submodules.
\end {itemize}

\end{thm}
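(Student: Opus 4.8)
The plan is to deduce this statement from the analogous theorem for the annihilating-submodule graph, Theorem F, by transporting it across the subgraph embedding of Proposition \ref{p4.3}(a). Since $M/\Im(T)$ is assumed not to be a vertex, the remark opening the proof of Proposition \ref{p4.3} applies: there is no submodule $N$ with $\Im(T)\subsetneq N<M$ and $V^{m}(N)=T$, and Proposition \ref{p4.3}(a) realises $AG(M/\Im(T))$ as an isomorphic copy of a subgraph of $G(\tau^{m}_{T})$. I would therefore organise the equivalences as (a)$\Leftrightarrow$(b), comparing the two graphs through this embedding, and (b)$\Leftrightarrow$(c), applying Theorem F directly to $M/\Im(T)$.

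The implication (a)$\Rightarrow$(b) is immediate, since a subgraph of a finite graph is finite. For (b)$\Leftrightarrow$(c) and the counting clause I would apply Theorem F to $M/\Im(T)$, which is faithful by hypothesis and which I take to be non-prime (this is the setting of Theorem F; I return to the prime boundary below). Theorem F then gives that $AG(M/\Im(T))$ is finite if and only if $M/\Im(T)$ has finitely many submodules, and that $AG(M/\Im(T))$ has exactly as many vertices as $M/\Im(T)$ has nonzero proper submodules; in particular every nonzero proper submodule of $M/\Im(T)$ is then a vertex of $AG(M/\Im(T))$.

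To finish I must establish (c)$\Rightarrow$(a) and promote the vertex count from $AG(M/\Im(T))$ to $G(\tau^{m}_{T})$. Assuming (c), the space $T=V^{m}(\Im(T))$ is finite, its points being the finitely many maximal submodules of $M/\Im(T)$. Given a vertex $N$ of $G(\tau^{m}_{T})$, the relations $V^{m}(N)\subsetneq T$ and $V^{m}(N)\cup V^{m}(L)=T$ force $\Im(T)\subsetneq J^{m}(N)<M$, so $J^{m}(N)/\Im(T)$ is a nonzero proper submodule of $M/\Im(T)$; conversely, by the previous paragraph each nonzero proper submodule $N/\Im(T)$ is a vertex of $AG(M/\Im(T))$ and hence, via Proposition \ref{p4.3}(a), yields a vertex $N$ of $G(\tau^{m}_{T})$. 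Matching the two directions should produce a bijection between the vertex set of $G(\tau^{m}_{T})$ and the nonzero proper submodules of $M/\Im(T)$, which simultaneously delivers (c)$\Rightarrow$(a) and the final count.

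The step I expect to be the main obstacle is verifying that this correspondence really is a bijection, equivalently that the embedding of Proposition \ref{p4.3}(a) is surjective on vertices. The difficulty is that a vertex $N$ of $G(\tau^{m}_{T})$ determines only its $J$-radical, $J^{m}(N)\supseteq\Im(T)$, rather than $N$ itself, so several submodules sharing a common value of $V^{m}$ could sit over one submodule of $M/\Im(T)$; this would inflate, and possibly render infinite, the vertex set of $G(\tau^{m}_{T})$ in direction (b)/(c)$\Rightarrow$(a). To control this I would exploit the standing hypotheses --- faithfulness, which gives $(\Im(T):M)=0$, together with ``$M/\Im(T)$ is not a vertex'' --- to show that each vertex may be taken equal to its own $J$-radical, hence to contain $\Im(T)$, so that $N\mapsto N/\Im(T)$ is a well-defined injection onto the vertices of $AG(M/\Im(T))$. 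A related point needing care is the prime boundary: if $M/\Im(T)$ is prime then $AG(M/\Im(T))$ is empty by Theorem B, and one must check separately that $G(\tau^{m}_{T})$ is then empty as well, since Theorem F is not available in that case; this is exactly the regime in which the comparison of the two graphs is most delicate.
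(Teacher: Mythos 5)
Your architecture matches the paper's: (a)$\Rightarrow$(b) via the embedding of Proposition \ref{p4.3}(a), (b)$\Leftrightarrow$(c) via the finiteness results for the annihilating-submodule graph, and (c)$\Rightarrow$(a) by upgrading the embedding to a bijection on vertices. But the step you yourself single out as the main obstacle --- showing every vertex $N$ of $G(\tau^{m}_{T})$ contains $\Im(T)$ and that $N\mapsto N/\Im(T)$ is injective onto vertices of $AG(M/\Im(T))$ --- is exactly the content of the paper's Proposition \ref{p4.7}, and the ingredients you name for it (faithfulness plus ``$M/\Im(T)$ is not a vertex'') do not suffice. The paper's mechanism is: (c) forces $M/\Im(T)$ to have finite length, hence (faithfulness: $R$ embeds in a finite power of $M/\Im(T)$) $R$ is Artinian and $dim(R)=0$; moreover $(\Im(T):M)=\bigcap_{Q\in T}(Q:M)=0$ is an intersection of maximal ideals, so $Nil(R)=0$; then Lemma \ref{l4.6} together with \cite[Theorem 2.3]{bkk} give $J^{m}(K)=K$ for \emph{every} submodule $K$. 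Only with this does one get, for adjacent $N,L$, that $NL=J^{m}(NL)=\Im(T)$, hence $\Im(T)\subseteq N$, and that distinct vertices have distinct images --- i.e.\ the isomorphism $G(\tau^{m}_{T})\cong AG(M/\Im(T))$ of Proposition \ref{p4.7}. Note also that your phrase ``each vertex may be taken equal to its own $J$-radical'' is not the statement you need: for finiteness and for the counting clause each vertex must actually \emph{equal} its $J$-radical, since otherwise several submodules sharing one value of $V^{m}$ would be distinct vertices. Since this crux is left as a hope, with the wrong hypotheses identified as its source, the proposal has a genuine gap at the heart of (c)$\Rightarrow$(a) and of the ``moreover'' clause.

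Two smaller points. First, your appeal to Theorem F requires $R$ reduced, which you never verify; it does hold here, by the same observation $Nil(R)\subseteq\bigcap_{Q\in T}(Q:M)=0$, but the paper instead derives (b)$\Rightarrow$(c) from Theorem E (a finite non-empty graph plus non-primeness gives finite length) and Proposition D (over the resulting Artinian ring, every nonzero proper submodule of $M/\Im(T)$ is a vertex). Second, your worry about the prime boundary is well founded, but your proposed repair (checking $G(\tau^{m}_{T})=\emptyset$ there) does not rescue (b)$\Rightarrow$(c): if $M/\Im(T)$ is prime both graphs are empty, yet $M/\Im(T)$ can have infinitely many submodules (take $M=R=\Bbb Z$ and $T=Max(\Bbb Z)$, so $\Im(T)=0$), so that implication genuinely needs the graphs to be non-empty --- a restriction the paper's proof also imposes silently by starting from ``$n\geq 1$ vertices.''
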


\begin{proof}

$(a)\Rightarrow (b)$ This follows from Proposition \ref{p4.3} (a).

 $(b) \Rightarrow (c)$ Suppose $AG(M/ \Im(T))$ is a finite graph with $n$ vertices ($n\geq 1$).
 By Theorem E, $M/ \Im(T)$ has finite length.
  Now the claim follows from Proposition D.

$(c) \Rightarrow (a)$ $M/ \Im(T)$ has finite length so that
$dim(R)=0$. Also, it follows from the hypothesis that $Nil(R)=0$.
Therefore, $G(\tau^{m}_{T})$
 is a finite graph by Proposition \ref{p4.7}. The second assertion follows easily from the
  above arguments.
\end{proof}

The following example shows that
  the hypotheses "$M/ \Im(T)$ is a faithful module" is
  needed in the above theorem.

\begin{eg}\label{e4.11} Consider Example \ref{e3.5} (case (1)). When $|T|\geq 2$ and
$$T\subseteq \{p_{1}\Bbb Z\bigoplus \Bbb Z, \ldots
,p_{n}\Bbb Z\bigoplus \Bbb Z\}.$$ Then every element $T$ is a
vertex and $G(\tau^{m}_{T})$ is an infinite graph because
$p_{i}^{k}\Bbb Z\bigoplus \Bbb Z$ is a vertex for every positive
integer $k$ and $1\leq i\leq n$. Now we have
$$R/\Im(T)=\Bbb Z\bigoplus \Bbb Z/(p_{1}p_{2}\ldots
p_{k}\Bbb Z)\bigoplus \Bbb Z.$$ It follows that $AG(R/\Im(T))$ is
a finite graph.
\end{eg}

 We end this work with the following question:

\begin{que}\label{q4.12}
Let $G(\tau^{m}_{T})\neq \emptyset$, where $T$ be an infinite
subset of $Max(M)$. Is $T\bigcap V(G(\tau^{m}_{T}))\neq
\emptyset?$
\end{que}

\end{document}